% interactnlmsample.tex
% v1.05 - August 2017

\documentclass[]{interact}

\usepackage{epstopdf}% To incorporate .eps illustrations using PDFLaTeX, etc.
\usepackage[caption=false]{subfig}% Support for small, `sub' figures and tables
\usepackage{xcolor}
\usepackage{amsmath}
\usepackage{amsmath,cases}
\usepackage{mathtools}
\usepackage{xcolor}
\usepackage[numbers,sort&compress]{natbib}% Citation support using natbib.sty
\usepackage{mathtools}
\bibpunct[, ]{[}{]}{,}{n}{,}{,}% Citation support using natbib.sty
% Bibliography support using natbib.sty
\makeatletter% @ becomes a letter
\def\NAT@def@citea{\def\@citea{\NAT@separator}}% Suppress spaces between citations using natbib.sty
\makeatother% @ becomes a symbol again

\theoremstyle{plain}% Theorem-like structures provided by amsthm.sty
\newtheorem{theorem}{Theorem}[section]
\newtheorem{lemma}[theorem]{Lemma}

\newtheorem{proposition}[theorem]{Proposition}

\theoremstyle{definition}
\newtheorem{definition}[theorem]{Definition}

\theoremstyle{remark}
\newtheorem{remark}{Remark}

\begin{document}

\articletype{Research Paper}% Specify the article type or omit as appropriate

\title{Three solutions for a double phase variable exponent Kirchhoff problem}

\author{\name{Mustafa Avci\thanks{CONTACT M.~Avci. Email:  mavci@athabascau.ca (primary) \& avcixmustafa@gmail.com}}
\affil{Faculty of Science and Technology, Applied Mathematics, Athabasca University, AB, Canada}}

\maketitle

\begin{abstract}
In this article, we study a double-phase variable-exponent Kirchhoff problem and show the existence of at least three solutions. The proposed model, as a generalization of the Kirchhoff equation, is interesting since it is driven by  a double-phase operator that governs anisotropic and heterogeneous diffusion associated with the energy functional, as well as encapsulating two different types of elliptic behavior within the same framework. To tackle the problem, we obtain regularity results for the corresponding energy functional, which makes the problem suitable for the application of a well-known critical point result by Bonanno and Marano. We introduce an $n$-dimensional vector inequality, not covered in the literature, which provides a key auxiliary tool for establishing essential regularity properties of the energy functional such as $C^1$-smoothness, the $(S_+)$-condition, and sequential weak lower semicontinuity.
\end{abstract}

\begin{keywords}
Double phase variable exponent problem; $p(x)$-Kirchhoff problem; critical point theory; Musielak-Orlicz Sobolev space
\end{keywords}

\begin{amscode}
35A01; 35A15; 35B38; 35D30; 35J20; 35J60
\end{amscode}

\section{Introduction}\label{sec1}
In this article, we study the following two double phase variable exponent Kirchhoff problem

\begin{equation}\label{e1.1a}
\left\{\begin{array}{ll}
\displaystyle-\mathcal{M}\left(\int_\Omega
\frac{|\nabla u|^{p(x)}}{p(x)}+\mu(x)\frac{|\nabla u|^{q(x)}}{q(x)}
\right)&\\
\hspace{0.4cm} \times \hspace{0.1cm} \mathrm{div}\left(|\nabla u|^{p(x)-2}\nabla u+\mu(x)|\nabla u|^{q(x)-2}\nabla u\right)=\lambda f(x,u) \text{ in}\ \Omega,\\
\hspace{7.3cm} u=0  \quad \quad \quad \text{ on }\partial \Omega,\tag{$\mathcal{P_\lambda}$}
\end{array}
\right.
\end{equation}
where $\Omega$ is a bounded domain in $\mathbb{R}^N$ $(N\geq2)$ with Lipschitz boundary; $p,q\in C_+(\overline{\Omega })$; $f$ is a Carathéodory function; and $\lambda>0$ is a real parameter.\\

The problem (\ref{e1.1a}) indicates a generalization of the Kirchhoff equation \cite{kirchhoff1883mechanik}. Initially, Kirchhoff suggested a model given by the equation
\begin{equation*}
\rho \frac{\partial ^{2}u}{\partial t^{2}}-\left(\frac{P_{0}}{h}+\frac{E}{2l}\int_{0}^{l}\left\vert \frac{\partial u}{\partial x}\right\vert
^{2}dx\right) \frac{\partial ^{2}u}{\partial x^{2}}=0,
\end{equation*}
where $\rho $, $P_{0}$, $h$, $E$, $l$ are constants, which extends the classical D'Alambert's wave equation, by considering the effects of the changes in the length of the strings during the vibrations. However, since then, this model and its various perturbed versions, especially in the variable exponent setting, have been studied intensively by many authors. \\

Equations of the form (\ref{e1.1a}) appear in models involving materials with non-uniform (anisotropic) properties, fluid mechanics, image processing, and elasticity in non-homogeneous materials. Consequently, problem (\ref{e1.1a}) can be used to model various real-world phenomena, primarily due to the presence of the operator
\begin{equation}\label{e1.2a}
\mathrm{div}\left(|\nabla u|^{p(x)-2}\nabla u+\mu(x)|\nabla u|^{q(x)-2}\nabla u\right)
\end{equation}
 which governs anisotropic and heterogeneous diffusion associated with the energy functional
\begin{equation}\label{e1.2b}
u\to \int_{\Omega}\left(\frac{|\nabla u|^{p(x)}}{p(x)}+\mu(x)\frac{|\nabla u|^{q(x)}}{q(x)}\right)dx,\,\ u \in W_0^{1,\mathcal{H}}(\Omega)
\end{equation}
which is called a "double-phase'' operator. {This functional displays varying ellipticity depending on the regions where the weight function $\mu(\cdot)$ vanishes, thereby transitioning between two distinct elliptic phases. Zhikov \cite{zhikov1987averaging} was the first to study this type of functional with constant exponents, aiming to describe the behavior of strongly anisotropic materials. In the framework of elasticity theory, the function $\mu(\cdot)$ reflects the geometric structure of composites made from two distinct materials characterized by power-law hardening exponents $p(\cdot)$ and $q(\cdot)$.} Since then, numerous studies have explored this topic due to its wide applicability across various disciplines (e.g., \cite{baroni2015harnack,baroni2018regularity,colombo2015bounded,colombo2015regularity,marcellini1991regularity,marcellini1989regularity,galewski2024variational,bu2022p,lapa2015no,el2023existence,albalawi2022gradient}).
The interested reader may also refer to \cite{radulescu2019isotropic} for an overview of the isotropic and anisotropic double-phase problems.

The analysis of problem (\ref{e1.1a}) requires the framework of Musielak–Orlicz Sobolev spaces. In the context of fluid mechanics, Orlicz spaces provide a natural and more flexible framework than classical $L^p$ spaces for the modeling of materials whose rheological behavior deviates from standard power-law profiles. They accommodate more general growth conditions such as logarithmic corrections $t^p \log(1 + t)$ or exponential-type responses that frequently arise in the study of slow or fast diffusion, polymeric flows, and plasticity. These phenomena are often encountered in non-Newtonian and complex fluids, where the stress–strain relationship is nonlinear and varies with physical parameters like pressure, electric field, or temperature.
%MDPI: Please confirm if the italics should be retained. please check the whole text.%Author: italics removed here and throughout the paper. also m dashes "—" removed here.

We refer to the following related papers where double-phase variable-exponent Kirchhoff problems are studied. In \cite{zuo2024double}, the authors study a class of double-phase variable-exponent problems of the Kirchhoff type. Using the sub-supersolution method within an appropriate Musielak--Orlicz Sobolev space framework, they demonstrate the existence of at least one nonnegative solution. Moreover, by imposing an additional assumption on the nonlinearity, the authors employ variational arguments to establish the existence of a second nonnegative solution. In \cite{el2025existence}, the authors address a class of Kirchhoff-type problems in a double-phase setting with a small perturbation. The authors provide a new, less restrictive assumption than the (AR)-condition, which is a crucial tool in applying the Mountain Pass Theorem, under which the problem admits at least two weak solutions. The proof is based on variational arguments, utilizing the Mountain Pass Theorem with the Cerami condition. {In contrast to the works \cite{zuo2024double,el2025existence}, the principal novelty of this paper lies in the introduction of a new $n$-dimensional vector inequality, presented in Proposition \ref{Prop:2.2bc}. This inequality serves as a crucial auxiliary tool for establishing key regularity properties of the energy functional $\mathcal{K}$ and its derivative $\mathcal{K}^{\prime}$, including $C^1$-smoothness, the $(S_+)$-condition, and sequential weak lower semicontinuity.}

The paper is organized as follows. In Section \ref{sec2}, we first provide some background for the theory of variable Sobolev spaces $W_{0}^{1,p(x)}(\Omega)$ and the Musielak--Orlicz Sobolev space $W_0^{1,\mathcal{H}}(\Omega)$. In this section, we prove Proposition \ref{Prop:2.2bc}, which is the main originality of the paper, as well as the crucial auxiliary result  showing that the functional $\mathcal{K}$ is continuously G\^{a}teaux-differentiable, which is one of the main difficulties in the study of problem (\ref{e1.1a}). In Section \ref{sec3}, we  obtain another crucial auxiliary result, namely Lemmas \ref{Lem:4.2} and \ref{Lem:4.3}, where we show the required regularity assumptions of the corresponding functionals $\mathcal{I}_{\lambda}$, $\mathcal{K}$, and $\mathcal{J}$ of problem (\ref{e1.1a}). Then, we show that problem (\ref{e1.1a}) admits at least three distinct weak solutions by applying the well-known critical point result given by Bonanno and and Marano \cite{bonanno2010structure} (Theorem 3.6).

\section{Mathematical Background and Auxiliary Results}\label{sec2}

We start with some basic concepts of variable Lebesgue-Sobolev spaces. For more details, and the proof of the following propositions, we refer the reader to \cite{cruz2013variable,diening2011lebesgue,edmunds2000sobolev,fan2001spaces,radulescu2015partial}.\\
Define the set
\begin{equation*}
C_{+}\left( \overline{\Omega }\right) =\left\{h\in C(\overline{\Omega }):  h(x)>1 \text{ for all\ }x\in
\overline{\Omega }\right\} .
\end{equation*}
For $h\in C_{+}( \overline{\Omega }) $, we denote
\begin{equation*}
h^{-}:=\underset{x\in \overline{\Omega }}{\min }h(x) \leq h(x) \leq h^{+}:=\underset{x\in \overline{\Omega }}{\max}h(x) <\infty.
\end{equation*}
We also use the following notations ($a,b,t \in \mathbb{R}_+$):
\begin{equation*}
\min t^{h(x)}=t^{h_{-}}:=
\left\{ \begin{array}{ll}
t^{h^+},\quad & t< 1 \\
t^{h^-},\quad & t\geq 1
\end{array}
\right. \text{;  } \max t^{h(x)}=t^{h_{+}}=:
\left\{ \begin{array}{ll}
t^{h^-},\quad & t< 1  \\
t^{h^+},\quad & t\geq 1
\end{array}\right.
\end{equation*}
and
\begin{equation*}
t^{a\vee b}:=
\left\{ \begin{array}{ll}
t^{\min\{a,b\}},\quad & t< 1 \\
t^{\max\{a,b\}},\quad & t\geq 1
\end{array}
\right. \text{;  } t^{a\wedge b}:=
\left\{ \begin{array}{ll}
t^{\max\{a,b\}},\quad & t< 1 \\
t^{\min\{a,b\}},\quad & t\geq 1
\end{array}\right.
\end{equation*}
For any $h\in C_{+}\left( \overline{\Omega }\right) $, we define \textit{the
variable exponent Lebesgue space} by
\begin{equation*}
L^{h(x)}(\Omega) =\left\{ u\mid u:\Omega\rightarrow\mathbb{R}\text{ is measurable},\int_{\Omega }|u(x)|^{h(x) }dx<\infty \right\}.
\end{equation*}
Then, $L^{h(x)}(\Omega)$ endowed with the norm
\begin{equation*}
|u|_{h(x)}=\inf \left\{ \lambda>0:\int_{\Omega }\left\vert \frac{u(x)}{\lambda }
\right\vert ^{h(x)}dx\leq 1\right\} ,
\end{equation*}
becomes a Banach space.\\
The convex functional $\rho :L^{h(x) }(\Omega) \rightarrow\mathbb{R}$ defined by
\begin{equation*}
\rho(u) =\int_{\Omega }|u(x)|^{h(x)}dx,
\end{equation*}
is called modular on $L^{h(x) }(\Omega)$.

\begin{proposition}\label{Prop:2.2} If $u,u_{n}\in L^{h(x) }(\Omega)$, we have
\begin{itemize}
\item[$(i)$] $|u|_{h(x) }<1 ( =1;>1) \Leftrightarrow \rho(u) <1 (=1;>1);$
\item[$(ii)$] $|u|_{h( x) }>1 \implies |u|_{h(x)}^{h^{-}}\leq \rho(u) \leq |u|_{h( x) }^{h^{+}}$;\newline
$|u|_{h(x) }\leq1 \implies |u|_{h(x) }^{h^{+}}\leq \rho(u) \leq |u|_{h(x) }^{h^{-}};$
\item[$(iii)$] $\lim\limits_{n\rightarrow \infty }|u_{n}-u|_{h(x)}=0\Leftrightarrow \lim\limits_{n\rightarrow \infty }\rho (u_{n}-u)=0$.
\end{itemize}
\end{proposition}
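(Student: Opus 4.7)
The plan is to derive all three assertions from a single auxiliary fact: for every nonzero $u \in L^{h(x)}(\Omega)$, the map $\varphi_u(\lambda) := \rho(u/\lambda)$ is continuous and strictly decreasing on $(0,\infty)$, and the infimum in the definition of the Luxemburg norm is attained, so that $\rho(u/|u|_{h(x)}) = 1$. Continuity follows from dominated convergence (once $\lambda$ is bounded below away from $0$, the integrand is dominated by a fixed $L^1$ function), strict decrease from $h(x) > 1$ together with the positivity of the measure of $\{u \neq 0\}$, and attainment from the intermediate value theorem, since $\varphi_u(\lambda) \to 0$ as $\lambda \to \infty$ while $\varphi_u(\lambda) > 1$ for all sufficiently small admissible $\lambda$.

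Assertion (i) is then immediate: I would compare $\rho(u) = \varphi_u(1)$ with $\varphi_u(|u|_{h(x)}) = 1$ and invoke strict monotonicity of $\varphi_u$; for instance, $|u|_{h(x)} < 1$ forces $\rho(u) = \varphi_u(1) < \varphi_u(|u|_{h(x)}) = 1$, and the cases $|u|_{h(x)} = 1$ and $|u|_{h(x)} > 1$ are analogous, with the reverse implications following again from strict monotonicity.

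For (ii), I would exploit the pointwise sandwich $|u|_{h(x)}^{h^-} \leq |u|_{h(x)}^{h(x)} \leq |u|_{h(x)}^{h^+}$, which holds when $|u|_{h(x)} \geq 1$ (with the inequality chain reversed when $|u|_{h(x)} \leq 1$). Substituting this into
\[
1 \;=\; \rho\bigl(u/|u|_{h(x)}\bigr) \;=\; \int_\Omega \frac{|u(x)|^{h(x)}}{|u|_{h(x)}^{h(x)}}\, dx
\]
and pulling the constant bounds out of the integral yields the two-sided estimate in each case after elementary rearrangement.

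Part (iii) then falls out by applying (i) and (ii) to $u_n - u$: if $|u_n - u|_{h(x)} \to 0$, then eventually $|u_n - u|_{h(x)} < 1$, so (ii) gives $\rho(u_n - u) \leq |u_n - u|_{h(x)}^{h^-} \to 0$; conversely, if $\rho(u_n - u) \to 0$, then eventually $\rho(u_n - u) < 1$, so by (i) also $|u_n - u|_{h(x)} < 1$, and (ii) now yields $|u_n - u|_{h(x)} \leq \rho(u_n - u)^{1/h^+} \to 0$. The step I expect to be the main obstacle is the auxiliary claim about attainment of the infimum in the Luxemburg norm, because one must carefully handle the possibility that $\rho(u/\lambda) = +\infty$ for small $\lambda$ and justify continuity at the boundary of the set where $\varphi_u$ is finite; once continuity and strict monotonicity of $\varphi_u$ are in hand, the rest of the argument reduces to routine scaling.
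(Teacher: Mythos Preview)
The paper does not actually prove this proposition: it states the result and defers the proof to the standard references on variable exponent spaces (Diening et al., Fan--Zhao, Cruz-Uribe--Fiorenza, etc.). Your argument is correct and is precisely the standard route taken in those references: establish that the scaling map $\lambda\mapsto\rho(u/\lambda)$ is continuous, strictly decreasing, and hits the value $1$, then read off (i) and (ii) by comparison with $\lambda=1$, and deduce (iii) from (ii).

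One small remark: your stated ``main obstacle'' is not really an obstacle here. Since $u\in L^{h(x)}(\Omega)$ means $\rho(u)<\infty$, for any $\lambda\in(0,1)$ one has the pointwise bound $|u/\lambda|^{h(x)}\le \lambda^{-h^{+}}|u|^{h(x)}$, so $\varphi_u(\lambda)\le \lambda^{-h^{+}}\rho(u)<\infty$ for every $\lambda>0$; the map $\varphi_u$ is finite on all of $(0,\infty)$ and continuity is a straightforward dominated-convergence argument. The limit $\varphi_u(\lambda)\to\infty$ as $\lambda\to 0^{+}$ follows from the companion lower bound $\varphi_u(\lambda)\ge \lambda^{-h^{-}}\rho(u)$, so the intermediate value theorem applies without any boundary subtlety.
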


\begin{proposition}\label{Prop:2.2bb}
Let $h_1(x)$ and $h_2(x)$ be measurable functions such that $h_1\in L^{\infty}(\Omega )$ and $1\leq h_1(x)h_2(x)\leq \infty$ for a.e. $x\in \Omega$. Let $u\in L^{h_2(x)}(\Omega ),~u\neq 0$. Then
\begin{itemize}
\item[$(i)$] $\left\vert u\right\vert _{h_1(x)h_2(x)}\leq 1\text{\ }\Longrightarrow
\left\vert u\right\vert _{h_1(x)h_2(x)}^{h_1^{+}}\leq \left\vert \left\vert
u\right\vert ^{h_1(x)}\right\vert _{h_2(x)}\leq \left\vert
u\right\vert _{h_1(x)h_2(x)}^{h^{-}}$
\item[$(ii)$] $\left\vert u\right\vert _{h_1(x)h_2(x)}> 1\ \Longrightarrow \left\vert
u\right\vert _{h_1(x)h_2(x)}^{h_1^{-}}\leq \left\vert \left\vert u\right\vert
^{h_1(x)}\right\vert _{h_2(x) }\leq \left\vert u\right\vert
_{h_1(x)h_2(x)}^{h_1^{+}}$
\item[$(iii)$] In particular, if $h_1(x)=h$ is constant then
\begin{equation*}
\left\vert \left\vert u\right\vert ^{h}\right\vert _{h_2(x)}=\left\vert
u\right\vert _{hh_2(x)}^{h}.
\end{equation*}
\end{itemize}
\end{proposition}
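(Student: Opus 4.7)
\textbf{Proof proposal for Proposition \ref{Prop:2.2bb}.}
The plan is to reduce everything to the Luxemburg-norm characterization: for an exponent $h\in C_+(\overline{\Omega})$ and $u\neq 0$, the norm $|u|_{h(x)}$ is the unique $\lambda>0$ with $\int_{\Omega}(|u|/\lambda)^{h(x)}dx=1$. The crucial observation that drives the whole argument is the pointwise identity
\begin{equation*}
\left(|u(x)|^{h_1(x)}\right)^{h_2(x)} \;=\; |u(x)|^{h_1(x)h_2(x)},
\end{equation*}
so that the modular of $|u|^{h_1(x)}$ with respect to $h_2(x)$ equals the modular of $u$ with respect to $h_1(x)h_2(x)$. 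My plan is to set $\lambda := |u|_{h_1(x)h_2(x)}$, test the definition of $\bigl||u|^{h_1(x)}\bigr|_{h_2(x)}$ against the two candidate values $\lambda^{h_1^+}$ and $\lambda^{h_1^-}$, and compare the resulting integral with $1$ by invoking only the monotonicity of $s\mapsto \lambda^s$.

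More concretely, starting from $\int_{\Omega}(|u|/\lambda)^{h_1(x)h_2(x)}dx=1$, I would compute for any constant $\alpha>0$
\begin{equation*}
\int_{\Omega}\!\left(\frac{|u|^{h_1(x)}}{\lambda^{\alpha}}\right)^{\!h_2(x)}\!dx
\;=\;\int_{\Omega}|u|^{h_1(x)h_2(x)}\,\lambda^{-\alpha h_2(x)}dx
\;=\;\int_{\Omega}\!\left(\frac{|u|}{\lambda}\right)^{\!h_1(x)h_2(x)}\!\lambda^{\,(h_1(x)-\alpha)h_2(x)}dx.
\end{equation*}
In case (i), $\lambda\leq 1$, the factor $\lambda^{(h_1(x)-\alpha)h_2(x)}$ is $\geq 1$ when $\alpha\geq h_1(x)$ and $\leq 1$ when $\alpha\leq h_1(x)$. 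Taking $\alpha=h_1^+$ then shows the tested integral is $\geq 1$, forcing $\bigl||u|^{h_1(x)}\bigr|_{h_2(x)}\geq \lambda^{h_1^+}$; taking $\alpha=h_1^-$ shows it is $\leq 1$, forcing $\bigl||u|^{h_1(x)}\bigr|_{h_2(x)}\leq \lambda^{h_1^-}$. Case (ii), $\lambda>1$, is strictly analogous but with the two inequalities on $\lambda^{(h_1(x)-\alpha)h_2(x)}$ flipped, which swaps which of $\alpha=h_1^\pm$ yields the upper vs.\ the lower bound. Item (iii) is then immediate, since for constant $h_1\equiv h$ both ends of the bracket coincide, and one can actually compute the norm directly from the equality in the key identity above, producing $\bigl||u|^h\bigr|_{h_2(x)}=|u|_{hh_2(x)}^{h}$.

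I do not anticipate a genuine obstacle: the argument is just careful bookkeeping of which sign of $(h_1(x)-\alpha)$ combines with $\lambda\lessgtr 1$ to produce an integrand $\geq 1$ or $\leq 1$, together with a final appeal to Proposition \ref{Prop:2.2}(i) to convert modular inequalities into norm inequalities. The one mild subtlety to keep in mind is that $\lambda^{h_1^\pm}$ must be interpreted correctly when $\lambda$ crosses $1$, but this is exactly the dichotomy that separates (i) and (ii) and is the reason both pairs of exponents appear. Thus the main technical ingredient is Proposition \ref{Prop:2.2}, and the new content lies entirely in the pointwise exponent identity and the monotonicity argument just described.
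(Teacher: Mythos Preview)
Your argument is correct and is in fact the standard proof of this norm--modular comparison; note, however, that the paper does not supply its own proof of Proposition~\ref{Prop:2.2bb} but merely quotes it as a background result from the references on variable-exponent spaces, so there is no ``paper's proof'' to compare against. The only small point worth tightening is that the characterization ``$|u|_{h(x)}$ is the \emph{unique} $\lambda>0$ with $\int_\Omega(|u|/\lambda)^{h(x)}dx=1$'' needs $h^+<\infty$ (which here follows from $h_1\in L^\infty$ together with the implicit assumption that $h_2$ is a legitimate variable exponent), and that the sign bookkeeping uses $h_2(x)\geq 1>0$, which is part of the standing setup for $L^{h_2(x)}(\Omega)$; once these are made explicit your proof goes through verbatim.
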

The variable exponent Sobolev space $W^{1,h(x)}( \Omega)$ is defined by
\begin{equation*}
W^{1,h(x)}(\Omega) =\{u\in L^{h(x) }(\Omega) : |\nabla u| \in L^{h(x)}(\Omega)\},
\end{equation*}
with the norm
\begin{equation*}
\|u\|_{1,h(x)}=|u|_{h(x)}+|\nabla u|_{h(x)},\\\ \forall u\in W^{1,h(x)}(\Omega).
\end{equation*}
where $\nabla u=(\partial_{x_{1}}u,...,\partial_{x_{N}}u) $ and $\partial_{x_{i}}=\frac{\partial }{\partial x_{i}}$ is the partial differential operator.
\begin{proposition}\label{Prop:2.4} If $1<h^{-}\leq h^{+}<\infty $, then the spaces
$L^{h(x)}(\Omega)$ and $W^{1,h(x)}(\Omega)$ are separable and reflexive Banach spaces.
\end{proposition}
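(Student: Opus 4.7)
The plan is to prove the three properties (Banach space, separability, reflexivity) first for the Lebesgue space $L^{h(x)}(\Omega)$, and then transfer them to $W^{1,h(x)}(\Omega)$ via an isometric embedding into a product space.

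For $L^{h(x)}(\Omega)$, I would first verify that the Luxemburg norm $|\cdot|_{h(x)}$ is indeed a norm (homogeneity and the triangle inequality follow from the convexity of the modular $\rho$), and then establish completeness. Given a Cauchy sequence $\{u_n\}$ in $L^{h(x)}(\Omega)$, extract a subsequence converging pointwise a.e.\ to some measurable $u$; by Fatou's lemma applied to the modular $\rho$, together with the equivalence of norm convergence and modular convergence from Proposition \ref{Prop:2.2}(iii), one concludes $u\in L^{h(x)}(\Omega)$ and $|u_n-u|_{h(x)}\to 0$. For separability, I would use the bound $h^+<\infty$ to show that continuous functions with compact support in $\Omega$ are dense in $L^{h(x)}(\Omega)$ (via truncation and mollification, justified through Proposition \ref{Prop:2.2}(ii) to control modulars), and then approximate these by simple functions with rational values on cubes with rational vertices.

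For reflexivity, the cleanest route is the Milman--Pettis theorem, so I would prove uniform convexity of the Luxemburg norm. The key step is a variable-exponent analogue of Clarkson's inequality, essentially showing that for $u,v\in L^{h(x)}(\Omega)$ with $|u|_{h(x)},|v|_{h(x)}\le 1$ and $|u-v|_{h(x)}\ge \varepsilon$, one has
\begin{equation*}
\rho\!\left(\tfrac{u+v}{2}\right) \le 1 - \delta(\varepsilon),
\end{equation*}
for some $\delta(\varepsilon)>0$ depending only on $h^-, h^+, \varepsilon$. This is precisely where the two-sided bound $1<h^-\le h^+<\infty$ is used, since it allows one to pass from a pointwise convexity inequality for the map $t\mapsto t^{h(x)}$ (with modulus depending on $h(x)\in[h^-,h^+]$) to a global modular estimate, and then, via Proposition \ref{Prop:2.2}(i)--(ii), back to the norm.

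Finally, for $W^{1,h(x)}(\Omega)$, I would consider the linear map
\begin{equation*}
T\colon W^{1,h(x)}(\Omega)\to \bigl(L^{h(x)}(\Omega)\bigr)^{N+1},\qquad Tu=(u,\partial_{x_1}u,\dots,\partial_{x_N}u),
\end{equation*}
which is isometric when the product space is equipped with the sum norm, and verify that its image is closed (using the fact that weak derivatives are preserved under $L^{h(x)}$-limits, which follows from H\"older's inequality in variable exponent spaces and completeness). Since separability and reflexivity pass to finite products and to closed subspaces, the conclusion for $W^{1,h(x)}(\Omega)$ follows immediately. The main obstacle is establishing the uniform convexity estimate above; everything else is a routine adaptation of the classical $L^p$ arguments, but the Clarkson-type modular inequality for a varying exponent requires careful bookkeeping of the dependence on $h(x)$ and is where the hypothesis $1<h^-\le h^+<\infty$ becomes essential.
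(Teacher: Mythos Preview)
Your sketch is correct and follows the standard route found in the references the paper cites (e.g., Diening et al., Fan--Zhao): completeness via Fatou on the modular, separability via density of $C_c(\Omega)$ under $h^+<\infty$, reflexivity via uniform convexity of the Luxemburg norm and Milman--Pettis, and finally the closed isometric embedding $u\mapsto(u,\nabla u)$ into $(L^{h(x)})^{N+1}$ to transfer everything to $W^{1,h(x)}$. The paper itself does not prove this proposition at all; it is stated as background and the reader is referred to the literature for the proof, so there is no ``paper's own proof'' to compare against---your outline is essentially what one finds in those sources.
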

The space $W_{0}^{1,h(x)}(\Omega)$ is defined as
$\overline{C_{0}^{\infty }(\Omega )}^{\|\cdot\|_{1,h(x)}}=W_{0}^{1,h(x)}(\Omega)$, and hence, it is the smallest closed set that contains $C_{0}^{\infty }(\Omega )$. Therefore, $W_{0}^{1,h(x)}(\Omega)$ is also a separable and reflexive Banach space due to the inclusion $W_{0}^{1,h(x)}(\Omega) \subset W^{1,h(x)}(\Omega)$. \\
Note that as  a consequence of Poincar\'{e} inequality, $\|u\|_{1,h(x)}$ and $|\nabla u|_{h(x)}$ are equivalent norms on $W_{0}^{1,h(x)}(\Omega)$. Therefore, for any $u\in W_{0}^{1,h(x)}(\Omega)$ we can define an equivalent norm $\|u\|$ such that
\begin{equation*}
\|u\| =|\nabla u|_{h(x)}.
\end{equation*}
\begin{proposition}\label{Prop:2.5} Let $m\in C(\overline{\Omega })$. If $1\leq m(x) <h^{\ast }(x)$ for all $x\in
\overline{\Omega }$, then the embeddings $W^{1,h(x)}(\Omega) \hookrightarrow L^{h(x)}(\Omega)$ and $W_0^{1,h(x)}(\Omega) \hookrightarrow L^{h(x)}(\Omega)$  are compact and continuous, where
$h^{\ast }( x) =\left\{\begin{array}{cc}
\frac{Nh(x)}{N-h(x)} & \text{if }h(x)<N, \\
+\infty & \text{if }h(x) \geq N.
\end{array}
\right. $
\end{proposition}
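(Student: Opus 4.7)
The plan is to reduce the claim to the classical Rellich--Kondrachov theorem, exploiting the continuity of $m$ (the intended target exponent) and $h^{*}$ on the compact set $\overline{\Omega}$. Since $m(x)<h^{*}(x)$ pointwise with both functions continuous, a standard compactness argument provides a uniform gap, so that one can insert a constant exponent $r$ satisfying $m^{+}\le r<(h^{-})^{*}$ sandwiched between the variable target exponent $m(\cdot)$ and the Sobolev conjugate of $h^{-}$.

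For continuity of the embedding, I would proceed in three steps. First, use the inclusion $L^{h(x)}(\Omega)\hookrightarrow L^{h^{-}}(\Omega)$, which follows from H\"older's inequality applied to the pair $h(x)/h^{-}$ and its dual exponent together with the boundedness of $\Omega$, to obtain $W^{1,h(x)}(\Omega)\hookrightarrow W^{1,h^{-}}(\Omega)$. Second, invoke the classical Sobolev embedding $W^{1,h^{-}}(\Omega)\hookrightarrow L^{r}(\Omega)$, available because $r<(h^{-})^{*}$. Third, use $L^{r}(\Omega)\hookrightarrow L^{m(x)}(\Omega)$, which again follows from H\"older's inequality on the bounded domain since $m(x)\le r$ on $\overline{\Omega}$. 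Composing these three continuous inclusions yields the continuous embedding, with norm bounds tracked through Proposition \ref{Prop:2.2}.

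For the compactness part, the key ingredient is the classical Rellich--Kondrachov theorem, which provides a compact embedding $W^{1,h^{-}}(\Omega)\hookrightarrow L^{r}(\Omega)$. Given a bounded sequence $\{u_{n}\}\subset W^{1,h(x)}(\Omega)$, its boundedness in $W^{1,h^{-}}(\Omega)$ allows extraction of a subsequence converging in $L^{r}(\Omega)$, and then the continuous inclusion $L^{r}(\Omega)\hookrightarrow L^{m(x)}(\Omega)$ pushes the convergence into $L^{m(x)}(\Omega)$. Item $(iii)$ of Proposition \ref{Prop:2.2}, which equates modular convergence with norm convergence in variable Lebesgue spaces, is what allows one to upgrade integral convergence to convergence in the variable exponent norm. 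The same argument specializes to $W_0^{1,h(x)}(\Omega)$ since $W_{0}^{1,h(x)}(\Omega)\subset W^{1,h(x)}(\Omega)$.

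The main obstacle lies in choosing the constant $r$ with $m^{+}\le r<(h^{-})^{*}$; when $m^{+}$ approaches the critical Sobolev exponent this pinch can be tight, and the bound $(h^{-})^{*}$ may be too crude since $h^{*}(x)$ can be substantially larger than $(h^{-})^{*}$ on portions of $\Omega$. The fallback plan is to partition $\overline{\Omega}$ via a finite open cover $\{U_{j}\}$ on which $h$ and $m$ are nearly constant, carry out the preceding three-step argument locally on each $U_{j}$ with constant exponents $h_{j}\approx h(x_{j})$ and $m_{j}\approx m(x_{j})$ satisfying $m_{j}<h_{j}^{*}$, and then glue the local compactness through a partition of unity subordinate to the cover, appealing along the way to the density of $C_{0}^{\infty}(\Omega)$ in $W_{0}^{1,h(x)}(\Omega)$ for the second embedding.
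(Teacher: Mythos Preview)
The paper does not actually prove this proposition: it is listed among the background facts for which the reader is referred to the standard monographs \cite{cruz2013variable,diening2011lebesgue,edmunds2000sobolev,fan2001spaces,radulescu2015partial}. So there is no ``paper's own proof'' to compare against.

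That said, your outline is the standard argument one finds in those references (in particular, this is essentially the proof in Fan--Zhao \cite{fan2001spaces}). You have also correctly read the statement as an embedding into $L^{m(x)}(\Omega)$ rather than $L^{h(x)}(\Omega)$; the latter appears to be a typographical slip in the paper, since the hypothesis $1\le m(x)<h^{*}(x)$ would otherwise play no role. Your identification of the obstruction---that $m^{+}<(h^{-})^{*}$ can fail even when $m(x)<h^{*}(x)$ pointwise---is exactly right, and the localization via a finite cover with nearly constant exponents plus a partition of unity is the correct remedy; this is how the references handle it. One small refinement: in the local step you want to choose constants $h_{j}\le\inf_{U_{j}}h$ and $m_{j}\ge\sup_{U_{j}}m$ with $m_{j}<h_{j}^{*}$, which the uniform continuity of $m$ and $h$ on $\overline{\Omega}$ and the strict inequality $m<h^{*}$ guarantee for a fine enough cover.
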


Throughout the paper, we assume the following.
\begin{itemize}
\item[$(H_1)$] $p,q\in C_+(\overline{\Omega})$ with $p^-\leq p(x)\leq p^+<q^-\leq q(x)\leq q^+<N$.
\item[$(H_2)$] $\mu\in L^\infty(\Omega)$ such that $\mu(\cdot)\geq 0$.
\end{itemize}
To address problem (\ref{e1.1a}), it is necessary to utilize the theory of the Musielak-Orlicz Sobolev space $W_0^{1,\mathcal{H}}(\Omega)$. Therefore, we subsequently introduce the double-phase operator, the Musielak-Orlicz space, and the Musielak-Orlicz Sobolev space in turn.\\

Let $\mathcal{H}:\Omega\times [0,\infty]\to [0,\infty]$ be the nonlinear function, i.e. the \textit{double phase operator}, defined by
\[
\mathcal{H}(x,t)=t^{p(x)}+\mu(x)t^{q(x)}\ \text{for all}\ (x,t)\in \Omega\times [0,\infty].
\]
Then the corresponding modular $\rho_\mathcal{H}(\cdot)$ is given by
\[
\displaystyle\rho_\mathcal{H}(u)=\int_\Omega\mathcal{H}(x,|u|)dx=
\int_\Omega\left(|u|^{p(x)}+\mu(x)|u|^{q(x)}\right)dx.
\]
The \textit{Musielak-Orlicz space} $L^{\mathcal{H}}(\Omega)$, is defined by
\[
L^{\mathcal{H}}(\Omega)=\left\{u:\Omega\to \mathbb{R}\,\, \text{measurable};\,\, \rho_{\mathcal{H}}(u)<+\infty\right\},
\]
endowed with the Luxemburg norm
\[
\|u\|_{\mathcal{H}}=\inf\left\{\zeta>0: \rho_{\mathcal{H}}\left(\frac{u}{\zeta}\right)\leq 1\right\}.
\]
Analogous to Proposition \ref{Prop:2.2}, there are similar relationship between the modular $\rho_{\mathcal{H}}(\cdot)$ and the norm $\|\cdot\|_{\mathcal{H}}$, see \cite[Proposition 2.13]{crespo2022new} for a detailed proof.

\begin{proposition}\label{Prop:2.2a}
Assume $(H_1)$ hold, and $u\in L^{\mathcal{H}}(\Omega)$. Then
\begin{itemize}
\item[$(i)$] If $u\neq 0$, then $\|u\|_{\mathcal{H}}=\zeta\Leftrightarrow \rho_{\mathcal{H}}(\frac{u}{\zeta})=1$,
\item[$(ii)$] $\|u\|_{\mathcal{H}}<1\ (\text{resp.}\ >1, =1)\Leftrightarrow \rho_{\mathcal{H}}(\frac{u}{\zeta})<1\ (\text{resp.}\ >1, =1)$,
\item[$(iii)$] If $\|u\|_{\mathcal{H}}<1\Rightarrow \|u\|_{\mathcal{H}}^{q^+}\leq \rho_{\mathcal{H}}(u)\leq \|u\|_{\mathcal{H}}^{p^-}$,
\item[$(iv)$]If $\|u\|_{\mathcal{H}}>1\Rightarrow \|u\|_{\mathcal{H}}^{p^-}\leq \rho_{\mathcal{H}}(u)\leq \|u\|_{\mathcal{H}}^{q^+}$,
\item[$(v)$] $\|u\|_{\mathcal{H}}\to 0\Leftrightarrow \rho_{\mathcal{H}}(u)\to 0$,
\item[$(vi)$]$\|u\|_{\mathcal{H}}\to +\infty\Leftrightarrow \rho_{\mathcal{H}}(u)\to +\infty$,
\item[$(vii)$] $\|u\|_{\mathcal{H}}\to 1\Leftrightarrow \rho_{\mathcal{H}}(u)\to 1$,
\item[$(viii)$] If $u_n\to u$ in $L^{\mathcal{H}}(\Omega)$, then $\rho_{\mathcal{H}}(u_n)\to\rho_{\mathcal{H}}(u)$.
\end{itemize}
\end{proposition}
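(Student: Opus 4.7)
The plan is to base everything on the scaling behaviour of $\rho_{\mathcal{H}}$ together with the defining property of the Luxemburg norm. Items $(i)$ and $(ii)$ follow from standard modular machinery: under $(H_1)$, the map $\zeta \mapsto \rho_{\mathcal{H}}(u/\zeta)$ is nonincreasing and continuous on $(0,\infty)$, strictly decreasing where it is positive, and tends to $0$ as $\zeta \to \infty$. These monotonicity/continuity facts turn the infimum defining $\|u\|_{\mathcal{H}}$ into an attained minimum at which $\rho_{\mathcal{H}}(u/\zeta) = 1$, giving $(i)$; the corresponding trichotomy at $\zeta = 1$ then yields $(ii)$.

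The core of the argument is the modular/norm comparison in $(iii)$ and $(iv)$. For $u \neq 0$ set $\zeta = \|u\|_{\mathcal{H}}$, so by $(i)$ we have $\rho_{\mathcal{H}}(u/\zeta) = 1$. Writing $v = u/\zeta$,
\[
\rho_{\mathcal{H}}(u) = \int_\Omega \zeta^{p(x)} |v|^{p(x)}\,dx + \int_\Omega \mu(x)\,\zeta^{q(x)} |v|^{q(x)}\,dx.
\]
Using $(H_1)$ (so that $p(x),q(x) \in [p^-, q^+]$) and the monotonicity of $a \mapsto \zeta^a$, I would sandwich $\zeta^{p(x)}$ and $\zeta^{q(x)}$ simultaneously between the two extremal powers $\zeta^{p^-}$ and $\zeta^{q^+}$: when $\zeta < 1$ one has $\zeta^{q^+} \leq \zeta^{p(x)},\zeta^{q(x)} \leq \zeta^{p^-}$, and when $\zeta > 1$ the opposite ordering holds. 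Pulling the resulting constants out of the integral and using $\rho_{\mathcal{H}}(v)=1$ immediately produces the bound $\|u\|_{\mathcal{H}}^{q^+} \leq \rho_{\mathcal{H}}(u) \leq \|u\|_{\mathcal{H}}^{p^-}$ in case $(iii)$ and the reverse inequality in case $(iv)$.

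Items $(v)$, $(vi)$, $(vii)$ drop out of $(iii)$ and $(iv)$ by a simple squeeze: for $(v)$, once $\|u_n\|_{\mathcal{H}} < 1$ the bound $\|u_n\|_{\mathcal{H}}^{q^+} \leq \rho_{\mathcal{H}}(u_n) \leq \|u_n\|_{\mathcal{H}}^{p^-}$ forces the two modes of convergence to be equivalent; $(vi)$ is the analogous statement for blow-up, using $(iv)$; and $(vii)$ follows because both $\|u_n\|_{\mathcal{H}}^{p^-}$ and $\|u_n\|_{\mathcal{H}}^{q^+}$ tend to $1$ whenever $\|u_n\|_{\mathcal{H}} \to 1$.

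The delicate part is $(viii)$. By $(v)$ applied to the difference, norm convergence $u_n \to u$ is equivalent to $\rho_{\mathcal{H}}(u_n - u) \to 0$, but this does not yet yield $\rho_{\mathcal{H}}(u_n) \to \rho_{\mathcal{H}}(u)$. To close the gap I would pass to an a.e.\ convergent subsequence $u_{n_k} \to u$, apply the convexity-type estimate
\[
\mathcal{H}(x, |u_{n_k}|) \leq 2^{q^+ - 1}\bigl(\mathcal{H}(x,|u|) + \mathcal{H}(x, |u_{n_k} - u|)\bigr),
\]
and invoke Vitali's convergence theorem: uniform integrability of $\mathcal{H}(\cdot,|u_{n_k}|)$ comes from the second term on the right, whose integral vanishes by assumption, while the pointwise convergence $\mathcal{H}(x,|u_{n_k}|) \to \mathcal{H}(x,|u|)$ follows from the continuity of $\mathcal{H}(x,\cdot)$ and the a.e.\ convergence. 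A standard subsequence-of-subsequence argument then promotes the conclusion to the whole sequence. The main obstacle here is precisely this uniform integrability step in the variable exponent setting, where the convexity constant depends on $q^+$ rather than the usual $2$, and one must keep careful track of the weighted $q(x)$-term tied to $\mu(\cdot)$.
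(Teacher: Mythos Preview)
The paper does not actually prove this proposition: immediately before stating it, the author writes that the relations are analogous to Proposition~\ref{Prop:2.2} and refers the reader to \cite[Proposition~2.13]{crespo2022new} for a detailed proof. So there is no in-paper argument to compare against.

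That said, your sketch is correct and is exactly the standard route taken in the Musielak--Orlicz literature (including the cited reference): the continuity and strict monotonicity of $\zeta\mapsto\rho_{\mathcal H}(u/\zeta)$ give $(i)$--$(ii)$; the scaling computation with $v=u/\|u\|_{\mathcal H}$ and the sandwich $\zeta^{q^+}\le \zeta^{p(x)},\zeta^{q(x)}\le \zeta^{p^-}$ (or its reverse) gives $(iii)$--$(iv)$; items $(v)$--$(vii)$ then follow by squeezing. For $(viii)$ your Vitali argument via the bound $\mathcal H(x,|u_{n_k}|)\le 2^{q^+-1}\bigl(\mathcal H(x,|u|)+\mathcal H(x,|u_{n_k}-u|)\bigr)$ is fine; one can also argue more directly from convexity of $\rho_{\mathcal H}$ together with $(v)$, but either way the conclusion is the same.
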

The \textit{Musielak-Orlicz Sobolev space} $W^{1,\mathcal{H}}(\Omega)$ is defined by
\[
W^{1,\mathcal{H}}(\Omega)=\left\{u\in L^{\mathcal{H}}(\Omega):
|\nabla u|\in L^{\mathcal{H}}(\Omega)\right\},
\]
and equipped with the norm
\[
\|u\|_{1,\mathcal{H}}=\|\nabla u\|_{\mathcal{H}}+\|u\|_{\mathcal{H}},
\]
where $\|\nabla u\|_{\mathcal{H}}=\|\,|\nabla u|\,\|_{\mathcal{H}}$.
The space $W_0^{1,\mathcal{H}}(\Omega)$ is defined as $\overline{C_{0}^{\infty }(\Omega )}^{\|\cdot\|_{1,\mathcal{H}}}=W_0^{1,\mathcal{H}}(\Omega)$. Notice that $L^{\mathcal{H}}(\Omega), W^{1,\mathcal{H}}(\Omega)$ and $W_0^{1,\mathcal{H}}(\Omega)$ are uniformly convex and reflexive Banach spaces. Moreover, we have the following embeddings \cite[Proposition 2.16]{crespo2022new}.
\begin{proposition}\label{Prop:2.7a}
Let $(H)$ be satisfied. Then the following embeddings hold:
\begin{itemize}
\item[$(i)$] $L^{\mathcal{H}}(\Omega)\hookrightarrow L^{h(\cdot)}(\Omega), W^{1,\mathcal{H}}(\Omega)\hookrightarrow W^{1,h(\cdot)}(\Omega)$, $W_0^{1,\mathcal{H}}(\Omega)\hookrightarrow W_0^{1,h(\cdot)}(\Omega)$ are continuous for all $h\in C(\overline{\Omega})$ with $1\leq h(x)\leq p(x)$ for all $x\in \overline{\Omega}$.
\item[$(ii)$] $W^{1,\mathcal{H}}(\Omega)\hookrightarrow L^{h(\cdot)}(\Omega)$ and $W_0^{1,\mathcal{H}}(\Omega)\hookrightarrow L^{h(\cdot)}(\Omega)$ are compact for all $h\in C(\overline{\Omega})$ with $1\leq h(x)< p^*(x)$ for all $x\in \overline{\Omega}$.
\end{itemize}
\end{proposition}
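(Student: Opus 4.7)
The plan is to reduce both parts to the classical variable-exponent Sobolev embeddings (Proposition~\ref{Prop:2.5}) by first establishing the continuous embedding $L^{\mathcal{H}}(\Omega) \hookrightarrow L^{p(\cdot)}(\Omega)$, and then exploiting the fact that $\Omega$ is bounded together with $p(\cdot) \geq h(\cdot)$ to descend to $L^{h(\cdot)}(\Omega)$. The Sobolev-space versions then follow by applying the Lebesgue-space argument componentwise to $u$ and to $\nabla u$.

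For part (i), I would use the pointwise bound $|u|^{p(x)} \leq \mathcal{H}(x,|u|) = |u|^{p(x)} + \mu(x)|u|^{q(x)}$, which holds because $\mu \geq 0$ by $(H_2)$. For any $u \in L^{\mathcal{H}}(\Omega)$ with $u \neq 0$, setting $\zeta := \|u\|_{\mathcal{H}}$ and invoking Proposition~\ref{Prop:2.2a}(i) yields
$$
\int_\Omega \left|\frac{u(x)}{\zeta}\right|^{p(x)} dx \;\leq\; \rho_{\mathcal{H}}\!\left(\tfrac{u}{\zeta}\right) = 1,
$$
so the definition of the Luxemburg norm on $L^{p(\cdot)}(\Omega)$ gives $|u|_{p(\cdot)} \leq \|u\|_{\mathcal{H}}$. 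A standard application of the generalized H\"older inequality with exponents $p(x)/h(x)$ and its conjugate then yields the continuous embedding $L^{p(\cdot)}(\Omega) \hookrightarrow L^{h(\cdot)}(\Omega)$ whenever $1 \leq h(x) \leq p(x)$ and $|\Omega| < \infty$. Composing these two embeddings gives the claim $L^{\mathcal{H}}(\Omega) \hookrightarrow L^{h(\cdot)}(\Omega)$, and the same argument applied separately to $u$ and to $\nabla u$ delivers the Sobolev-space embeddings $W^{1,\mathcal{H}}(\Omega) \hookrightarrow W^{1,h(\cdot)}(\Omega)$ and $W_0^{1,\mathcal{H}}(\Omega) \hookrightarrow W_0^{1,h(\cdot)}(\Omega)$.

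For part (ii), I would factor the compact embedding as
$$
W_0^{1,\mathcal{H}}(\Omega) \hookrightarrow W_0^{1,p(\cdot)}(\Omega) \hookrightarrow L^{h(\cdot)}(\Omega),
$$
where the first arrow is the continuous embedding from part (i) (taking $h \equiv p$ there, applied to $u$ and $\nabla u$), and the second is the \emph{compact} embedding supplied by Proposition~\ref{Prop:2.5} under the hypothesis $1 \leq h(x) < p^*(x)$. Since the composition of a bounded linear operator with a compact one is compact, this yields the result; the same argument works verbatim for $W^{1,\mathcal{H}}(\Omega)$. The main technical subtlety lies in the $L^{p(\cdot)} \hookrightarrow L^{h(\cdot)}$ step when $h(x) = p(x)$ on a subset of positive measure, where the H\"older conjugate $p(x)/(p(x) - h(x))$ formally degenerates; this is handled by decomposing $\Omega$ into $\{h = p\}$, on which the inclusion is immediate pointwise, and its complement, on which the generalized H\"older inequality applies without obstruction.
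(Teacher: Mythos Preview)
Your argument is correct. The paper does not actually prove Proposition~\ref{Prop:2.7a}; it simply cites the result from \cite[Proposition~2.16]{crespo2022new} without supplying any argument. Your proposal therefore goes beyond what the paper does by giving a self-contained proof. The strategy you outline---first establishing $L^{\mathcal{H}}(\Omega)\hookrightarrow L^{p(\cdot)}(\Omega)$ via the pointwise inequality $|u|^{p(x)}\leq \mathcal{H}(x,|u|)$ and Proposition~\ref{Prop:2.2a}(i), then descending to $L^{h(\cdot)}(\Omega)$ by H\"older on the bounded domain, and finally factoring the compact embedding through $W_0^{1,p(\cdot)}(\Omega)$ and invoking Proposition~\ref{Prop:2.5}---is exactly the standard route and is sound. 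The only place to be slightly more careful is the passage to $W_0^{1,\mathcal{H}}(\Omega)\hookrightarrow W_0^{1,h(\cdot)}(\Omega)$: since both spaces are defined as closures of $C_0^\infty(\Omega)$, you should note that a $W^{1,\mathcal{H}}$-approximating sequence from $C_0^\infty(\Omega)$ is automatically a $W^{1,h(\cdot)}$-approximating sequence by the continuous embedding already established, so the limit lies in $W_0^{1,h(\cdot)}(\Omega)$.
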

As a conclusion of Proposition \ref{Prop:2.7a}, we have the continuous embedding $W_0^{1,\mathcal{H}}(\Omega)\hookrightarrow L^{h(\cdot)}(\Omega)$, and $c_{\mathcal{H}}$ denotes the best constant such that
\[
|u|_{h(\cdot)}\leq c_{\mathcal{H}}\|u\|_{1,\mathcal{H},0}.
\]
Moreover, by \cite[Proposition 2.18]{crespo2022new}, $W_0^{1,\mathcal{H}}(\Omega)$ is compactly embedded in $L^{\mathcal{H}}(\Omega)$. Thus,
$W_0^{1,\mathcal{H}}(\Omega)$ can be equipped with the equivalent norm
\[
\|u\|_{1,\mathcal{H},0}=\|\nabla u\|_{\mathcal{H}}.
\]
\begin{proposition}\label{Prop:2.7}
For the convex functional $\varrho_{\mathcal{H}}(u):=\int_\Omega\left(\frac{|\nabla u|^{p(x)}}{p(x)}+\mu(x)\frac{|\nabla u|^{q(x)}}{q(x)}\right)dx$, we have $\varrho_{\mathcal{H}}\in C^{1}(W_0^{1,\mathcal{H}}(\Omega),\mathbb{R})$ with the derivative
$$
\langle\varrho_{\mathcal{H}}^{\prime}(u),\varphi\rangle=\int_{\Omega}(|\nabla u|^{p(x)-2}\nabla u+\mu(x)|\nabla u|^{q(x)-2}\nabla u)\cdot\nabla \varphi dx,
$$
for all  $u, \varphi \in W_0^{1,\mathcal{H}}(\Omega)$, where $\langle \cdot, \cdot\rangle$ is the dual pairing between $W_0^{1,\mathcal{H}}(\Omega)$ and its dual $W_0^{1,\mathcal{H}}(\Omega)^{*}$ \cite{crespo2022new}.
\end{proposition}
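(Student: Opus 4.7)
The plan is to establish Proposition \ref{Prop:2.7} in two stages, following the standard approach for energy functionals on Musielak--Orlicz Sobolev spaces. First, I would verify that $\varrho_{\mathcal{H}}$ is G\^{a}teaux differentiable with the claimed derivative formula. Second, I would show that $\varrho_{\mathcal{H}}'$ is continuous as a map from $W_0^{1,\mathcal{H}}(\Omega)$ into its dual. The combination of these two facts upgrades G\^{a}teaux differentiability to continuous Fr\'echet differentiability, i.e., $\varrho_{\mathcal{H}}\in C^1$.

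For the G\^{a}teaux step, fix $u,\varphi\in W_0^{1,\mathcal{H}}(\Omega)$ and $t\in (-1,1)\setminus\{0\}$. By the mean value theorem applied to $s\mapsto s^{p(x)}/p(x)$ and $s\mapsto s^{q(x)}/q(x)$, the integrand of the difference quotient $[\varrho_{\mathcal{H}}(u+t\varphi)-\varrho_{\mathcal{H}}(u)]/t$ converges pointwise a.e.\ to
$$|\nabla u|^{p(x)-2}\nabla u\cdot\nabla\varphi+\mu(x)|\nabla u|^{q(x)-2}\nabla u\cdot\nabla\varphi,$$
and is dominated by an expression of the form
$$C\bigl[(|\nabla u|+|\nabla\varphi|)^{p(x)-1}+\mu(x)(|\nabla u|+|\nabla\varphi|)^{q(x)-1}\bigr]|\nabla\varphi|.$$
Using H\"older's inequality in the Musielak--Orlicz setting applied to $\mathcal{H}$ and its conjugate, together with Proposition \ref{Prop:2.2a} to convert modular bounds into norm bounds, this dominating function lies in $L^1(\Omega)$. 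Dominated convergence then yields the claimed formula for $\langle\varrho_{\mathcal{H}}'(u),\varphi\rangle$, and linearity together with boundedness in $\varphi$ follow directly from the integral representation and the same H\"older estimate.

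For the continuity of $\varrho_{\mathcal{H}}'$, suppose $u_n\to u$ in $W_0^{1,\mathcal{H}}(\Omega)$. By Proposition \ref{Prop:2.2a}(v) applied to $\nabla u_n-\nabla u$, the modular $\rho_{\mathcal{H}}(\nabla u_n-\nabla u)\to 0$, so along a subsequence $\nabla u_n\to\nabla u$ a.e.\ in $\Omega$. For any test function $\varphi$ with $\|\varphi\|_{1,\mathcal{H},0}\leq 1$, I would bound $|\langle\varrho_{\mathcal{H}}'(u_n)-\varrho_{\mathcal{H}}'(u),\varphi\rangle|$ by a sum of two integrals, one involving $|\nabla u_n|^{p(x)-2}\nabla u_n-|\nabla u|^{p(x)-2}\nabla u$ and the other its $\mu$-weighted $q(x)$-analogue. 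By H\"older's inequality in $L^{\mathcal{H}}(\Omega)$ it suffices to show that these difference terms tend to zero in the conjugate modular, uniformly over such $\varphi$. Combining the a.e.\ convergence of $\nabla u_n$ with equi-integrability (extracted from the modular convergence via Proposition \ref{Prop:2.2a}(viii)), Vitali's convergence theorem yields the required decay, and a standard subsequence-of-subsequence argument then extends the conclusion to the full sequence.

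The main obstacle I expect is handling the dual of $L^{\mathcal{H}}(\Omega)$: the conjugate Young function $\mathcal{H}^{*}(x,\cdot)$ has no clean closed form, so the passage from modular control to norm control in the dual must be carried out carefully, invoking Proposition \ref{Prop:2.2a} repeatedly to switch between modular and norm statements. In particular, verifying equi-integrability of $\{|\nabla u_n|^{p(x)-1}\}$ and $\{\mu(x)|\nabla u_n|^{q(x)-1}\}$ for Vitali's theorem relies on the modular convergence $\rho_{\mathcal{H}}(\nabla u_n)\to \rho_{\mathcal{H}}(\nabla u)$, combined with the growth interplay between $p(x)$ and $q(x)$ secured by hypothesis $(H_1)$, which ensures both phases behave subcritically with respect to the governing modular.
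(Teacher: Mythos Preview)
The paper does not supply a proof of Proposition~\ref{Prop:2.7}: it is stated as a background result with a citation to \cite{crespo2022new}, so there is no ``paper's own proof'' to compare against. Your outline is the standard argument and is correct in structure; in fact the paper carries out essentially the same steps (mean value theorem plus dominated convergence for the G\^{a}teaux derivative, then Vitali's theorem for continuity of the derivative) when treating the composite functional $\mathcal{K}=\widehat{\mathcal{M}}\circ\varrho_{\mathcal{H}}$ in Lemma~\ref{Lem:4.2}(ii), so your plan is fully aligned with the techniques the paper itself employs.

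One minor point: in the continuity step you can avoid working directly with the conjugate $\mathcal{H}^{*}$ by splitting the pairing into its $p(\cdot)$- and $\mu$-weighted $q(\cdot)$-parts and applying the variable-exponent H\"older inequality in $L^{p(\cdot)}$ and $L^{q(\cdot)}_{\mu}$ separately, exactly as the paper does in \eqref{e3.11k}; this sidesteps the obstacle you flag about the lack of a closed form for $\mathcal{H}^{*}$.
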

\begin{remark}\label{Rem:2.1a}
Notice that
\begin{itemize}
\item[$(i)$] If $\|\nabla u\|_{\mathcal{H}}<1\Rightarrow \frac{1}{q^+}\|\nabla u\|_{\mathcal{H}}^{q^+}\leq \varrho_{\mathcal{H}}(u)\leq \frac{1}{p^-}\|\nabla u\|_{\mathcal{H}}^{p^-}$,
\item[$(ii)$]If $\|\nabla u\|_{\mathcal{H}}>1\Rightarrow \frac{1}{q^+}\|\nabla u\|_{\mathcal{H}}^{p^-}\leq \varrho_{\mathcal{H}}(u)\leq \frac{1}{p^-}\|\nabla u\|_{\mathcal{H}}^{q^+}$.
\end{itemize}
\end{remark}

We lastly introduce the seminormed space
\begin{equation}
L^{q(\cdot)}_{\mu}(\Omega)=\left\{u:\Omega\to \mathbb{R}\,\, \text{measurable};\,\, \int_\Omega\mu(x)|u|^{q(x)}dx<+\infty\right\},
\end{equation}
which is endow with the seminorm
\begin{equation}
|u|_{q(\cdot),\mu}=\inf\left\{\varsigma>0: \int_\Omega\mu(x)\left(\frac{|u|}{\varsigma}\right)^{q(x)}dx \leq 1 \right\}.
\end{equation}
We have $L^{\mathcal{H}}(\Omega)\hookrightarrow L^{q(\cdot)}_{\mu}(\Omega)$ continuously \cite[Proposition 2.16]{crespo2022new}.

\begin{proposition} \label{Prop:2.1} \cite[Proposition 2.7]{avci2019positive}
Let $X$ be a vector space, and let $I:X \rightarrow \mathbb{R}$. Then $I$ is convex if and only if
\begin{equation}\label{e3.3bc}
I((1-\epsilon)u+\epsilon v)\leq (1-\epsilon)\omega_1+\epsilon \omega_2, \,\,\,\, 0<\epsilon<1,
\end{equation}
whenever $I(u)<\omega_1$ and $I(v)<\omega_2$, for all $u,v \in X$ and $\omega_1,\omega_2 \in \mathbb{R}$.
\end{proposition}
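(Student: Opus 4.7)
The plan is to prove the equivalence by treating the two implications separately, each of which reduces to a one-line manipulation once one fixes the right test values for $\omega_1$ and $\omega_2$.

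For the forward direction, I would assume $I$ is convex and take $u,v \in X$, $\omega_1, \omega_2 \in \mathbb{R}$ with $I(u) < \omega_1$ and $I(v) < \omega_2$. Applying the definition of convexity directly gives
\begin{equation*}
I((1-\epsilon)u + \epsilon v) \leq (1-\epsilon) I(u) + \epsilon I(v) < (1-\epsilon)\omega_1 + \epsilon \omega_2,
\end{equation*}
which establishes (\ref{e3.3bc}) (with strict inequality, hence \emph{a fortiori} with the non-strict inequality stated).

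For the reverse direction, I would assume (\ref{e3.3bc}) and fix arbitrary $u, v \in X$ and $0 < \epsilon < 1$. The key idea is to approximate $I(u)$ and $I(v)$ from above: for any $\delta > 0$, set $\omega_1 := I(u) + \delta$ and $\omega_2 := I(v) + \delta$, so that $I(u) < \omega_1$ and $I(v) < \omega_2$ hold by construction. The hypothesis then yields
\begin{equation*}
I((1-\epsilon)u + \epsilon v) \leq (1-\epsilon)\omega_1 + \epsilon \omega_2 = (1-\epsilon) I(u) + \epsilon I(v) + \delta.
\end{equation*}
Letting $\delta \to 0^+$ gives the standard convexity inequality $I((1-\epsilon)u + \epsilon v) \leq (1-\epsilon)I(u) + \epsilon I(v)$, as desired.

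There is no real technical obstacle here: the proof is essentially a definitional unwinding. The only mildly subtle point is recognizing, for the converse, that one should feed the hypothesis \emph{not} the actual values $\omega_i = I(u), I(v)$ (which would violate the strict inequalities $I(u)<\omega_1$, $I(v)<\omega_2$) but slight perturbations $I(u)+\delta$, $I(v)+\delta$, and then pass to the limit $\delta \to 0^+$. This approximation step is what bridges the strict inequalities appearing in the hypothesis with the non-strict inequality demanded by the definition of convexity.
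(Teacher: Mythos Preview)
Your proof is correct and follows essentially the same approach as the paper: both directions are handled identically, with the forward implication a direct application of convexity and the reverse implication obtained by choosing $\omega_1 = I(u)+\delta$, $\omega_2 = I(v)+\delta$ and letting $\delta \to 0^+$.
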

\begin{proof}
{The proof was originally established by the author in his earlier work (see \cite{avci2019positive,avci2024existence}). Assume that functional $I:X \rightarrow \mathbb{R}$ is convex. Since $I$ is a real-valued functional, there are real numbers $\theta,\delta \in \mathbb{R}$ such that $I(u)<\theta$ and $I(v)<\delta$. Then,}
\begin{equation*}
I((1-\lambda)u+\lambda v)<(1-\lambda)I(u)+\lambda I(v) <(1-\lambda)\theta+\lambda \delta, \,\,\,\, 0<\lambda<1.
\end{equation*}
{On} the other hand, assume that (\ref{e3.3bc}) holds. Since $I(u)<\theta$ and $I(v)<\delta$, we can write %Author: confirmed.
$$
I(u)<I(u)+\varepsilon:=\theta,
$$
$$
I(v)<I(v)+\varepsilon:=\delta,
$$
for a real number $\varepsilon >0$. Therefore,
\begin{equation}\label{e3.3bdd}
I((1-\lambda)u+\lambda v)<(1-\lambda)I(u)+\lambda I(v)+\varepsilon, \,\,\,\, 0<\lambda<1.
\end{equation}
{However}, considering that $\varepsilon >0$ is arbitrary, we conclude that %Author: confirmed.
\begin{equation*}
I((1-\lambda)u+\lambda v)\leq(1-\lambda)I(u)+\lambda I(v).
\end{equation*}
\end{proof}

\begin{proposition} \label{Prop:2.2bc}
Let $x,y \in \mathbb{R}^N$ and let $|\cdot|$ be the Euclidean norm in $\mathbb{R}^N$. Then for any $1\leq p<\infty$ and the real parameters $a,b>0$ it holds
\begin{equation}\label{e2.1a1}
\frac{|a |x|^{p-2}x-b |y|^{p-2}y| }{||x|^{p-2}x-|y|^{p-2}y|}\leq  \left(a+|a-b|\right)+\frac{|a-b|}{||x|^{p-2}x-|y|^{p-2}y|}.
\end{equation}
\end{proposition}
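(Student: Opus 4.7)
The plan is to prove~(\ref{e2.1a1}) by a direct algebraic decomposition followed by two applications of the triangle inequality in $\mathbb{R}^N$. As a first reduction, I would introduce the abbreviations $X := |x|^{p-2}x$ and $Y := |y|^{p-2}y$, so the claim becomes a statement about two vectors in $\mathbb{R}^N$ and two positive scalars $a,b$; once these substitutions are made, the exponent $p$ plays no further role in the argument, and no further smoothness or sign conditions on $x,y$ are needed.

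The key identity on which the proof rests is
\begin{equation*}
aX - bY \;=\; a(X - Y) + (a - b)\,Y,
\end{equation*}
which by the triangle inequality in $\mathbb{R}^N$ immediately yields $|aX - bY| \le a\,|X-Y| + |a-b|\,|Y|$. To cast this into the shape stated in~(\ref{e2.1a1}), I would estimate the residual $|Y|$ by a second application of the triangle inequality, namely $|Y| \le |X-Y| + |X|$, obtaining
\begin{equation*}
|aX - bY| \;\le\; (a + |a-b|)\,|X-Y| + |a-b|\,|X|.
\end{equation*}
Dividing both sides by $|X-Y|$ — which is well-defined because the left-hand side of~(\ref{e2.1a1}) is only meaningful when $X \ne Y$ — then produces an inequality of exactly the announced form.

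The main subtlety is a bookkeeping one rather than an analytic one: the symmetric decomposition $aX - bY = b(X-Y) + (a-b)X$ instead produces a bound of the shape $b + \frac{|a-b|\,|Y|}{|X-Y|}$, and one has to commit to whichever splitting is compatible with the constant $(a+|a-b|)$ and with the remainder written in~(\ref{e2.1a1}). Once the ``right'' decomposition is fixed, everything else is a two-line triangle-inequality computation that is uniform in the dimension $N$ and in the exponent $p\in[1,\infty)$; in particular the case $X=Y$ is excluded by the very form of the left-hand side, so there is no degeneracy to handle separately.
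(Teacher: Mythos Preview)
Your algebraic decomposition $aX-bY=a(X-Y)+(a-b)Y$ followed by $|Y|\le|X-Y|+|X|$ is precisely the computation the paper carries out. The gap is in your final step: after dividing by $|X-Y|$ you actually obtain
\[
\frac{|aX-bY|}{|X-Y|}\;\le\;(a+|a-b|)+\frac{|a-b|\,|X|}{|X-Y|},
\]
whose remainder carries an extra factor $|X|=|x|^{p-1}$, so this is \emph{not} ``exactly the announced form'' of~(\ref{e2.1a1}). (You implicitly acknowledge this when discussing the symmetric decomposition, where you correctly retain the factor $|Y|$ in the remainder term.)

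The paper removes this factor by a preliminary reduction that you omit: it observes that the left-hand ratio $\Lambda(x,y)$ is invariant under orthogonal transformations and under the scaling $(x,y)\mapsto(tx,ty)$, and on that basis normalizes to $x=e_1$, so that $|X|=|e_1|^{p-1}=1$ and the residual term collapses to $|a-b|/|X-Y|$. Without that normalization your chain of inequalities only yields the variant of~(\ref{e2.1a1}) with $|a-b|\,|x|^{p-1}$ in the numerator of the last fraction. One may further note that the \emph{right-hand side} of~(\ref{e2.1a1}) is itself not invariant under $(x,y)\mapsto(tx,ty)$, so this homogeneity reduction is less innocuous than it appears; but in any event it is the ingredient your proposal is missing.
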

\begin{proof}
If $a=b$, then there is nothing to do. So, we assume that $a \neq b$.\\
Put
\begin{equation}\label{e2.1a2}
\Lambda(x,y)=\frac{|a |x|^{p-2}x-b |y|^{p-2}y|}{||x|^{p-2}x-|y|^{p-2}y|}.
\end{equation}
Notice that $\Lambda$ is invariant by any orthogonal transformation $T$; that is, $\Lambda(Tx,Ty)=\Lambda(x,y)$ for all $x,y \in \mathbb{R}^N$.

Thus, using this fact, along with  the homogeneity of $\Lambda$,  we can let $x=|x|e_1$ and assume that $x=e_1$. Then,  without loss of generality, it is enough to work with the function
\begin{equation}\label{e2.1a5}
\Lambda(e_1,y)=\frac{|a e_1 -b |y|^{p-2}y|}{|e_1-|y|^{p-2}y|}.
\end{equation}
In doing so, first, we have
\begin{align}\label{e2.1a6}
|a e_1 -b |y|^{p-2}y|& =a\bigg|e_1 -\frac{b}{a} |y|^{p-2}y\bigg|=a\bigg|(e_1-|y|^{p-2}y) +\left(1-\frac{b}{a}\right) |y|^{p-2}y\bigg| \nonumber\\
& \leq a|e_1-|y|^{p-2}y|+|a-b|||y|^{p-2}y| \nonumber\\
& \leq a|e_1-|y|^{p-2}y|+|a-b|\left(|e_1-|y|^{p-2}y|+|e_1|\right) \nonumber\\
& \leq a|e_1-|y|^{p-2}y|+|a-b||e_1-|y|^{p-2}y|+|a-b| \nonumber\\
& \leq |e_1-|y|^{p-2}y|\left(a+|a-b|\right)+|a-b|.
\end{align}
Then using this in (\ref{e2.1a5}) we obtain
\begin{align}\label{e2.1a7}
\Lambda(e_1,y)&\leq\frac{|e_1-|y|^{p-2}y|\left(a+|a-b|\right)+|a-b|}{|e_1-|y|^{p-2}y|} \nonumber\\
&\leq a+|a-b|+\frac{|a-b|}{|e_1-|y|^{p-2}y|}
\end{align}
which shows that (\ref{e2.1a1}) hold.
\end{proof}

\section{The main results}\label{sec3}

The energy functional $\mathcal{I}:W_0^{1,\mathcal{H}}(\Omega)\rightarrow \mathbb{R}$ corresponding to equation (\ref{e1.1a}) by
\begin{align}\label{e3.1}
\mathcal{I}_{\lambda}(u)&= \widehat{\mathcal{M}}\left(\int_{\Omega}\left(\frac{|\nabla u|^{p(x)}}{p(x)}+\mu(x)\frac{|\nabla u|^{q(x)}}{q(x)}\right)dx\right)-\lambda\int_{\Omega}F(x,u)dx,
\end{align}
where $F(x,t)=\int_{0}^{t}f(x,s)ds$, and $\widehat{\mathcal{M}}(t)=\int^{t}_{0}\mathcal{M}(s)ds$.\\
\begin{definition}\label{Def:3.1} A function $u\in W_0^{1,\mathcal{H}}(\Omega)$ is called a weak solution to problem (\ref{e1.1a}) if
\begin{align}\label{e3.2}
&{\mathcal{M}}_{\varrho}(u)\int_{\Omega}(|\nabla u|^{p(x)-2}\nabla u+\mu(x)|\nabla u|^{q(x)-2}\nabla u)\cdot \nabla \varphi dx\nonumber\\
&=\lambda\int_{\Omega}f(x,u)\varphi dx,\,\,\ \forall \varphi\in W_0^{1,\mathcal{H}}(\Omega),
\end{align}
\end{definition}
where $\mathcal{M}(\varrho_{\mathcal{H}}(u)):=\mathcal{M}_{\varrho}(u)$. It is well-known that the critical points of the functional $\mathcal{I}_{\lambda}$ corresponds to the weak solutions of problem (\ref{e1.1a}).\\
Let's define the functionals $\mathcal{J},\mathcal{K}:W_0^{1,\mathcal{H}}(\Omega) \to \mathbb{R}$ by
\begin{equation}\label{e3.3}
\mathcal{J}(u):=\int_{\Omega}F(x,u)dx,
\end{equation}
and
\begin{equation}\label{e3.3a}
\mathcal{K}(u):=\widehat{\mathcal{M}}(\varrho_{\mathcal{H}}(u))=\widehat{\mathcal{M}}_{\varrho}(u).
\end{equation}
Then,
\begin{equation}\label{e3.3b}
\mathcal{I}_{\lambda}(\cdot):=\mathcal{K}(\cdot)-\lambda \mathcal{J}(\cdot).
\end{equation}

To obtain the main result, we apply the following well-known critical point result given by Bonanno and and Marano.

\begin{lemma}\label{Lem:3.1}\cite[Theorem 3.6]{bonanno2010structure}
Let $X$ be a reflexive real Banach space, $\Phi:X\rightarrow \mathbb{R}$ be a coercive, continuously G\^{a}teaux differentiable and sequentially weakly lower semi-continuous functional whose G\^{a}teaux
derivative admits a continuous inverse on $X^*$; $\Psi:X\rightarrow \mathbb{R}$ be a continuously G\^{a}teaux differentiable functional whose G\^{a}teaux derivative is compact
such that
\begin{equation}\label{e3.4}
\inf_{x\in X}\Phi (x)=\Phi (0)=\Psi (0)=0.
\end{equation}
Assume that there exist $r>0$ and $\overline{x}\in X$, with $r<\Phi(\overline{x})$, such that:
\begin{itemize}
\item[(a1)] $\frac{1}{r}\sup_{\Phi (x)\leq r}\Psi (x)\leq \frac{\Psi (\overline{x})}{\Phi (\overline{x})}$;
\item[(a2)] \textit{for each }$\lambda \in \Lambda _{r}:=\left( \frac{\Phi (\overline{x})}{\Psi (\overline{x})},\frac{r}{\sup_{\Phi (x)\leq r}\Psi (x)}\right)$, the functional $\Phi-\lambda \Psi$ is coercive.
\end{itemize}
Then for each $\lambda \in \Lambda_{r}$, the functional $\Phi-\lambda \Psi$ has at least three distinct critical points in $X$.
\end{lemma}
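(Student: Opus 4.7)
The plan is to derive the three critical points of $E_\lambda := \Phi - \lambda\Psi$ by combining the direct method on a suitable sublevel set with the mountain pass theorem, which is the standard route for this class of Ricceri--Bonanno--Marano results. First, I would verify the abstract regularity needed: since $\Psi^\prime$ is compact, a classical argument shows that $\Psi$ itself is sequentially weakly continuous on $X$, and together with the sequential weak lower semicontinuity of $\Phi$ this makes $E_\lambda$ sequentially weakly lower semicontinuous. Next, for any Palais--Smale sequence of $E_\lambda$, the coercivity guaranteed by $(a_2)$ (valid for every $\lambda\in\Lambda_r$) forces boundedness; reflexivity of $X$ then yields weak convergence along a subsequence, and the combination of the compactness of $\Psi^\prime$ with the continuous invertibility of $\Phi^\prime$ upgrades this to strong convergence, giving $(PS)_c$ at every real level $c$.

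The core step is the extraction of two local minima from hypothesis $(a_1)$. For each $\lambda\in\Lambda_r$, the restriction of $E_\lambda$ to the weakly closed sublevel set $\Phi^{-1}((-\infty,r])$ is coercive and weakly l.s.c., so the direct method produces a minimizer $x_1$. Rewriting $(a_1)$ as
$$
\sup_{\Phi(x)\leq r}\Psi(x)\;\leq\;\frac{r\,\Psi(\bar x)}{\Phi(\bar x)},
$$
and using $\lambda < r/\sup_{\Phi(x)\leq r}\Psi(x)$, one checks that on the level set $\{\Phi=r\}$ one has $E_\lambda \geq r - \lambda\sup_{\Phi\leq r}\Psi > 0$, while $E_\lambda(0)=0$ and, because $\lambda > \Phi(\bar x)/\Psi(\bar x)$, also $E_\lambda(\bar x) = \Phi(\bar x)-\lambda\Psi(\bar x) < 0$. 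The first inequality traps $x_1$ in the open set $\{\Phi<r\}$, so $x_1$ is a genuine local minimum of $E_\lambda$ on $X$ and hence a critical point. On the other hand, $\bar x$ lies outside the sublevel set and realizes strictly smaller energy than the barrier on $\{\Phi=r\}$; minimizing $E_\lambda$ on a larger sublevel set that contains $\bar x$, using once more coercivity and weak l.s.c., produces a second critical point $x_2\neq x_1$ with $E_\lambda(x_2)\leq E_\lambda(\bar x)<0$.

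Finally, $x_1$ and $x_2$ are two distinct local minima of $E_\lambda$ separated by the positive energy barrier on $\{\Phi = r\}$. Since $E_\lambda$ is $C^1$ and satisfies $(PS)$, the mountain pass theorem applied to paths joining $x_1$ and $x_2$ yields a third critical point $x_3$ of saddle type, and by construction $x_3 \neq x_1,x_2$.

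The main obstacle I expect is the quantitative bookkeeping in the central step: one must exploit both endpoints of the admissible interval $\Lambda_r = (\Phi(\bar x)/\Psi(\bar x),\;r/\sup_{\Phi\leq r}\Psi)$ simultaneously to ensure, on the one hand, that the constrained minimizer $x_1$ does not escape to the boundary $\{\Phi=r\}$ (so it is actually a free local minimum) and, on the other, that the mountain pass level between $x_1$ and $x_2$ is strictly above both minima. The remaining steps (weak continuity of $\Psi$, Palais--Smale, and applying the mountain pass theorem) are standard variational arguments in reflexive Banach spaces and should cause no real difficulty.
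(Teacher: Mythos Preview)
The paper does not give its own proof of this lemma: it is quoted verbatim as Theorem~3.6 of Bonanno--Marano (reference \cite{bonanno2010structure}) and used as a black box. So there is no ``paper's proof'' to compare against; the authors simply invoke the result.

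As for your outline itself, the overall mechanism (weak lower semicontinuity, Palais--Smale via coercivity and $(S_+)$-type invertibility, a local minimizer trapped in $\{\Phi<r\}$, a second minimizer of negative energy, then a mountain pass) is the right heuristic picture. There is, however, a genuine gap in the step where you produce the second local minimum. You obtain the global minimizer $x_2$ of $E_\lambda$ (coercivity plus weak l.s.c.) with $E_\lambda(x_2)\le E_\lambda(\bar x)<0$, and the constrained minimizer $x_1$ on $\{\Phi\le r\}$ lying in the open set $\{\Phi<r\}$. But nothing in your argument rules out $\Phi(x_2)<r$; in that case $x_2$ is itself admissible for the constrained problem, hence $x_1=x_2$, and you are left with only one local minimum plus the mountain-pass point, i.e.\ two critical points, not three. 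The barrier estimate $E_\lambda>0$ on $\{\Phi=r\}$ does show that $\bar x$ sits in a different ``well'', but turning this into an honest second local minimizer in the region $\{\Phi>r\}$ requires an additional localization argument that you have not supplied.

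In the original Bonanno--Marano proof this difficulty is handled not by a direct two-wells argument but through their abstract minimax framework (their Theorem~3.2/3.3 on the structure of the real function $\varphi(r)=\inf_{\Phi(x)<r}\frac{\sup_{\Phi(y)<r}\Psi(y)-\Psi(x)}{r-\Phi(x)}$), which is what actually forces the existence of two distinct local minima before the mountain pass is applied. If you want a self-contained proof along your lines, you would need to add that missing localization step explicitly.
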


We assume the following.
\begin{itemize}
\item[$(f_{1})$] $f:\Omega \times \mathbb{R} \to \mathbb{R}$ is a Carathéodory function and there exist real parameters $\bar{c}_{1}, \bar{c}_{2}\geq0$ satisfying
\begin{equation}\label{e3.5}
|f(x,t)| \leq  \bar{c}_{1}+ \bar{c}_{2}|t|^{s(x)-1},\\\ \forall(x,t) \in \Omega \times \mathbb{R}
\end{equation}
where $s\in C(\overline{\Omega })$ with $1< s(x) <p^{\ast }(x)$ for all $x\in\overline{\Omega}$;
\item[$(f_{2})$] there exists a real parameter $\bar{c}_{3}\geq0$ and $r \in C_+(\overline{\Omega})$ with $r^-\leq r(x)\leq r^+<p^-$ satisfying
\begin{equation}\label{e3.6}
F(x,t)\leq  \bar{c}_{3}(1+|t|^{r(x)}),\\\ \forall(x,t) \in \Omega \times \mathbb{R};
\end{equation}
\item[$(f_{3})$] $F(x,t)\geq 0$ for all $(x,t) \in \Omega \times \mathbb{R_+}$;
\item[$(M)$] $\mathcal{M}:(0,\infty)\to [m_0, m^0)$ is a $C^1$-continuous nondecreasing function satisfying
    \begin{equation}\label{e3.7}
    \kappa_1t^{\alpha_1-1}\leq \mathcal{M}(t)\leq \kappa_2t^{\alpha_2-1},
    \end{equation}
    where $m_0, m^0,\kappa_1,\kappa_2,\alpha_1,\alpha_2$ are positive real parameters such that $\kappa_2\geq \kappa_1$ and $\alpha_2\geq \alpha_1>1$.
\end{itemize}

The main result of the paper is:

\begin{theorem}\label{Thm:4.1}
Assume $(f_{1})-(f_{3})$ and $(M)$ are satisfied. Assume also that
\begin{itemize}
 \item[$(f_{4})$] there exists positive real parameters $r,\delta$ with \newline $r<\frac{\kappa_{1}}{\alpha_{1} (q^+)^{\alpha_{1}}} \omega_N^{\alpha_{1}} R^{N\alpha_{1}}(2^{N}-1)^{\alpha_{1}}2^{-N\alpha_{1}}\left(\frac{2\delta}{R}\right)^{\alpha_{1}(p^- \wedge p^+)}$ satisfying
\begin{align}\label{e3.7a}
\sigma_r:&=\biggl\{\bar{c}_{1}c_{\mathcal{H}}(q^+)^{\frac{1}{p^-}}\left(\frac{\alpha_{1}}{\kappa_{1}}\right)^{\frac{1}{\alpha_1(p^- \wedge q^+)}}  r^{\frac{1}{\alpha_1(p^- \wedge q^+)}} + \bar{c}_{2}c_{\mathcal{H}}^{s^+}(q^+)^{\frac{s^+}{p^-}} \left(\frac{\alpha_{1}}{\kappa_{1}}\right)^{\frac{s^- \wedge s^+}{\alpha_1(p^- \wedge q^+)}}  r^{\frac{s^- \wedge s^+}{\alpha_1(p^- \wedge q^+)}}\biggl\}\nonumber \\
&<\frac{2^{N(\alpha_{2}-1)}\alpha_{2} (p^-)^{\alpha_{2}}\inf_{x \in \Omega}F(x,\delta)}{\kappa_{2}(1+|\mu|_{\infty})^{\alpha_{2}}\omega_N^{\alpha_{2}-1}(2^{N}-1)^{\alpha_{2}}R^{N(\alpha_{2}-1)}\left(\frac{2\delta}{R}\right)^{\alpha_{2}(p^- \vee q^+)}}:=\sigma^r.
\end{align}
\end{itemize}
Then for any $\lambda \in \lambda_{r,\delta}:=\left(\frac{1}{\sigma^r},\frac{1}{\sigma_r} \right)$, the problem (\ref{e1.1a}) admits at least three distinct weak solutions.
\end{theorem}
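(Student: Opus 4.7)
The plan is to apply the Bonanno--Marano critical point result (Lemma \ref{Lem:3.1}) on the reflexive Banach space $X = W_0^{1,\mathcal{H}}(\Omega)$ with $\Phi := \mathcal{K}$ and $\Psi := \mathcal{J}$. The abstract regularity requirements on $\Phi$ (coercivity, continuous G\^ateaux differentiability, sequential weak lower semicontinuity, and continuous invertibility of $\Phi'$ on $X^{\ast}$) will be supplied by Lemmas \ref{Lem:4.2} and \ref{Lem:4.3} together with Proposition \ref{Prop:2.7} and assumption $(M)$; in particular, the $(S_+)$-property that yields the continuous inverse of $\Phi'$ is precisely the place where the new vector inequality of Proposition \ref{Prop:2.2bc} enters. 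Compactness of $\Psi'$ follows from the subcritical growth $(f_1)$ combined with the compact embedding in Proposition \ref{Prop:2.7a}(ii), and the normalisation $\inf_{X}\Phi = \Phi(0)=\Psi(0)=0$ is immediate from the definitions and $(f_3)$.

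To produce the distinguished point $\bar u \in X$ with $r < \Phi(\bar u)$, I would fix $x_0 \in \Omega$ and $R>0$ with $\overline{B(x_0,R)}\subset\Omega$, and take the standard truncated-cone function
\[
\bar{u}(x) = \begin{cases} 0, & x \in \Omega \setminus B(x_0,R), \\ \delta, & x \in B(x_0,R/2), \\ \dfrac{2\delta}{R}\bigl(R-|x-x_0|\bigr), & x \in B(x_0,R)\setminus B(x_0,R/2). \end{cases}
\]
Then $|\nabla\bar u| = 2\delta/R$ on the annulus of measure $\omega_N R^N(2^N-1)/2^N$ and vanishes elsewhere, so $\bar u \in W_0^{1,\mathcal{H}}(\Omega)$. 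Using $\mathcal{M}(t)\leq \kappa_2 t^{\alpha_2-1}$ and $\mu \in L^\infty$, the upper estimate of $\varrho_{\mathcal{H}}(\bar u)$ via the notation $(2\delta/R)^{p^- \vee q^+}$ of Section \ref{sec2} produces exactly the denominator in $\sigma^r$; combined with the lower bound $\mathcal{J}(\bar u) \geq \omega_N R^N 2^{-N}\inf_{x\in\Omega}F(x,\delta)$ from $(f_3)$, this gives $\mathcal{J}(\bar u)/\mathcal{K}(\bar u) \geq \sigma^r$. Conversely, $\mu\geq 0$ and $\mathcal{M}(t)\geq \kappa_1 t^{\alpha_1-1}$ yield a lower estimate of $\varrho_{\mathcal{H}}(\bar u)$ involving $(2\delta/R)^{p^- \wedge p^+}$ that is precisely the right-hand side of the smallness condition on $r$ in $(f_4)$, so that $r < \mathcal{K}(\bar u)$ holds.

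Next, to estimate $\sup_{\Phi(u)\leq r}\Psi(u)$ I would translate $\Phi(u)\leq r$ into $\varrho_{\mathcal{H}}(u) \leq (\alpha_1 r/\kappa_1)^{1/\alpha_1}$ via $(M)$, then into a norm bound $\|\nabla u\|_{\mathcal{H}} \leq (q^+)^{1/p^-}(\alpha_1 r/\kappa_1)^{1/(\alpha_1(p^- \wedge q^+))}$ by combining Remark \ref{Rem:2.1a} with Proposition \ref{Prop:2.2a}. Integrating the pointwise estimate $F(x,t) \leq \bar c_1|t|+(\bar c_2/s^-)|t|^{s(x)}$ coming from $(f_1)$, and using the embedding $W_0^{1,\mathcal{H}}(\Omega) \hookrightarrow L^{s(\cdot)}(\Omega)$ with constant $c_{\mathcal{H}}$ together with Proposition \ref{Prop:2.2}, the two resulting terms reproduce exactly the two summands of $\sigma_r$, giving $(1/r)\sup_{\Phi(u)\leq r}\Psi(u) \leq \sigma_r$. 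The hypothesis $\sigma_r < \sigma^r$ of $(f_4)$ then yields condition (a1) of Lemma \ref{Lem:3.1}.

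Finally, for (a2), I would combine the lower bound $\Phi(u) \geq (\kappa_1/(\alpha_1(q^+)^{\alpha_1}))\|\nabla u\|_{\mathcal{H}}^{\alpha_1 p^-}$ for large $\|\nabla u\|_{\mathcal{H}}$ (from Proposition \ref{Prop:2.2a}(iv) and the argument above) with $\mathcal{J}(u) \leq \bar c_3(|\Omega|+c_{\mathcal{H}}^{r^+}\|u\|_{1,\mathcal{H},0}^{r^+})$, which follows from $(f_2)$. Since $(H_1)$ and $(M)$ give $r^+ < p^- \leq \alpha_1 p^-$, the $\alpha_1 p^-$-growth of $\Phi$ strictly dominates the $r^+$-growth of $\lambda\Psi$ for every $\lambda>0$, so $\Phi-\lambda\Psi$ is coercive. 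All the hypotheses of Lemma \ref{Lem:3.1} thus hold for $\lambda \in \Lambda_r = (1/\sigma^r, 1/\sigma_r)$, and its conclusion delivers three distinct critical points of $\mathcal{I}_\lambda$, i.e.\ three distinct weak solutions of (\ref{e1.1a}). The main technical obstacle is the case distinction across the four regimes of Propositions \ref{Prop:2.2} and \ref{Prop:2.2a} (whether norm and modular are smaller or larger than $1$); this is precisely why the $\wedge/\vee$ notation of Section \ref{sec2} is needed to package all four cases into the single estimate $(f_4)$.
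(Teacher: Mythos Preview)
Your proposal is correct and follows essentially the same route as the paper's own proof: the same Bonanno--Marano framework with $\Phi=\mathcal{K}$, $\Psi=\mathcal{J}$, the same truncated-cone test function $\bar u$, the same upper/lower estimates on $\mathcal{K}(\bar u)$ and $\mathcal{J}(\bar u)$, the same translation of $\mathcal{K}(u)\leq r$ into a norm bound via $(M)$ and Remark~\ref{Rem:2.1a}, and the same coercivity argument from $(f_2)$ with $r^+<p^-\leq\alpha_1 p^-$. The only small inaccuracy is the attribution of Proposition~\ref{Prop:2.2bc}: in the paper that inequality is invoked in the proof of the \emph{continuity} of $\mathcal{K}'$ (Lemma~\ref{Lem:4.2}(ii)), not directly in the $(S_+)$ argument (part (iv)), although both feed into the homeomorphism conclusion (part (v)) that you need.
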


First, we obtain the regularity results of the functionals $\mathcal{I}_{\lambda}, \mathcal{K}$ and $\mathcal{J}$, which are needed to apply Lemma \ref{Lem:3.1}.\\

\begin{lemma}\label{Lem:4.2}
Assume that $(M)$ holds. Then the following holds:
\begin{itemize}
  \item [$(i)$] $\mathcal{K}$ is coercive.
  \item [$(ii)$] $\mathcal{K}$ is G\^{a}teaux differentiable whose G\^{a}teaux derivative $\mathcal{K}^{\prime}:W_0^{1,\mathcal{H}}(\Omega) \to W_0^{1,\mathcal{H}}(\Omega)^*$ given by the formula for $u, \varphi \in W_0^{1,p(x)}(\Omega)$
  \begin{equation}\label{e4.12ab}
  \langle \mathcal{K}^{\prime}(u),\varphi \rangle=\mathcal{M}_{\varrho}(u)\int_{\Omega}(|\nabla u|^{p(x)-2}\nabla u+\mu(x)|\nabla u|^{q(x)-2}\nabla u)\cdot \nabla \varphi dx,
  \end{equation}
   is bounded and continuous.
  \item [$(iii)$] $\mathcal{K}^{\prime}$ is strictly monotone.
  \item [$(iv)$] $\mathcal{K}^{\prime}$ satisfies the $(S_+)$-property; that is,
  \begin{equation*}
  \text{if } u_{n} \rightharpoonup u \in W_0^{1,\mathcal{H}}(\Omega) \text{ and } \limsup_{n\rightarrow\infty}\langle \mathcal{K}^{\prime}(u_{n}),u_{n}-u\rangle\leq 0, \text{ then } u_{n} \to u \in W_0^{1,\mathcal{H}}(\Omega).
  \end{equation*}
  \item [$(v)$] $\mathcal{K}^{\prime}$ is coercive and a homeomorphism.
  \item [$(vi)$] $\mathcal{K}$  is sequentially weakly lower semi-continuous.
\end{itemize}
\end{lemma}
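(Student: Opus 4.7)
The plan is to verify the six assertions in order, with parts $(ii)$ and $(iv)$ being where the Kirchhoff coefficient $\mathcal{M}_{\varrho}(\cdot)$ causes genuine difficulty and the new vector inequality of Proposition \ref{Prop:2.2bc} becomes indispensable. For part $(i)$, I would use $\widehat{\mathcal{M}}(t)\ge \frac{\kappa_1}{\alpha_1}t^{\alpha_1}$ (from $(M)$) together with the modular bound $\varrho_{\mathcal{H}}(u)\ge \frac{1}{q^+}\|\nabla u\|_{\mathcal{H}}^{p^-}$ from Remark \ref{Rem:2.1a} for $\|\nabla u\|_{\mathcal{H}}>1$, which yields $\mathcal{K}(u)\ge \frac{\kappa_1}{\alpha_1 (q^+)^{\alpha_1}}\|\nabla u\|_{\mathcal{H}}^{p^-\alpha_1}$ and hence coercivity.

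For part $(ii)$, the formula for $\mathcal{K}'$ is the chain rule applied to $\mathcal{K}=\widehat{\mathcal{M}}\circ \varrho_{\mathcal{H}}$ via Proposition \ref{Prop:2.7}, and boundedness of $\mathcal{K}'$ follows from the upper estimate $\mathcal{M}(t)\le \kappa_2 t^{\alpha_2-1}$ combined with H\"older's inequality on $L^{p(\cdot)}$ and the seminormed space $L^{q(\cdot)}_\mu(\Omega)$. The real work is continuity: given $u_n\to u$ in $W_0^{1,\mathcal{H}}(\Omega)$, one must show that the pointwise expression
\begin{equation*}
\bigl|\mathcal{M}_{\varrho}(u_n)|\nabla u_n|^{p(x)-2}\nabla u_n-\mathcal{M}_{\varrho}(u)|\nabla u|^{p(x)-2}\nabla u\bigr|
\end{equation*}
(and its $q(x)$ analogue) is dominated in an integrable fashion. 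This is where Proposition \ref{Prop:2.2bc} enters with $a=\mathcal{M}_{\varrho}(u_n)$ and $b=\mathcal{M}_{\varrho}(u)$; the inequality decouples the Kirchhoff coefficients from the gradient terms, reducing the estimate to the known continuity of the pure double-phase operator $\varrho'_{\mathcal{H}}$ plus a correction controlled by $|\mathcal{M}_{\varrho}(u_n)-\mathcal{M}_{\varrho}(u)|$, which is small because $\mathcal{M}$ and $\varrho_{\mathcal{H}}$ are continuous.

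For part $(iii)$, I would assume without loss of generality $\mathcal{M}_{\varrho}(u)\ge \mathcal{M}_{\varrho}(v)$ and decompose
\begin{align*}
\langle \mathcal{K}'(u)-\mathcal{K}'(v),u-v\rangle
&=\mathcal{M}_{\varrho}(v)\bigl\langle \varrho'_{\mathcal{H}}(u)-\varrho'_{\mathcal{H}}(v),u-v\bigr\rangle \\
&\quad+\bigl(\mathcal{M}_{\varrho}(u)-\mathcal{M}_{\varrho}(v)\bigr)\bigl\langle \varrho'_{\mathcal{H}}(u),u-v\bigr\rangle.
\end{align*}
The first term is strictly positive for $u\neq v$ by strict monotonicity of the double-phase operator; the second is nonnegative since $\mathcal{M}$ is nondecreasing and convexity of $\varrho_{\mathcal{H}}$ gives $\langle \varrho'_{\mathcal{H}}(u),u-v\rangle\ge \varrho_{\mathcal{H}}(u)-\varrho_{\mathcal{H}}(v)\ge 0$. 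For part $(iv)$, the lower bound $\mathcal{M}(t)\ge \kappa_1 t^{\alpha_1-1}$ together with boundedness of $\{u_n\}$ (and the trivial handling of $u_n\to 0$) shows $\mathcal{M}_{\varrho}(u_n)$ is bounded below by a positive constant, so $\limsup\langle \mathcal{K}'(u_n),u_n-u\rangle\le 0$ forces $\limsup\langle \varrho'_{\mathcal{H}}(u_n),u_n-u\rangle\le 0$, and the known $(S_+)$-property of $\varrho'_{\mathcal{H}}$ from the Musielak--Orlicz framework gives $u_n\to u$.

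For part $(v)$, coercivity of $\mathcal{K}'$ is obtained by estimating $\langle \mathcal{K}'(u),u\rangle\ge \kappa_1 \varrho_{\mathcal{H}}(u)^{\alpha_1-1}\langle \varrho'_{\mathcal{H}}(u),u\rangle$ and invoking Remark \ref{Rem:2.1a}; then strict monotonicity, continuity, coercivity, and the $(S_+)$-property combine via a standard Browder--Minty argument to give that $\mathcal{K}'$ is a homeomorphism onto the dual. Finally, part $(vi)$ is immediate: $\widehat{\mathcal{M}}$ is convex and nondecreasing (its derivative $\mathcal{M}$ being nondecreasing), $\varrho_{\mathcal{H}}$ is convex and continuous, hence the composition $\mathcal{K}=\widehat{\mathcal{M}}\circ\varrho_{\mathcal{H}}$ is convex and continuous, which implies sequential weak lower semicontinuity. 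The principal obstacle is the continuity estimate in $(ii)$, where prior results on $\varrho'_{\mathcal{H}}$ do not suffice because the Kirchhoff factor $\mathcal{M}_{\varrho}(u)$ depends nonlocally on $u$, and Proposition \ref{Prop:2.2bc} is precisely what isolates this dependence.
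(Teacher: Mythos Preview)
Your proposal is correct and, for parts $(i)$, $(ii)$, and $(v)$, follows essentially the same route as the paper, including the identification of Proposition~\ref{Prop:2.2bc} (with $a=\mathcal{M}_\varrho(u_n)$, $b=\mathcal{M}_\varrho(u)$) as the key device for the continuity of $\mathcal{K}'$.

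For parts $(iii)$, $(iv)$, and $(vi)$, however, you take genuinely shorter paths than the paper. In $(iii)$ the paper partitions $\Omega=\Omega_{\ge}\cup\Omega_{<}$ according to whether $|\nabla u|\ge|\nabla v|$ and uses the pointwise bound $\nabla u\cdot\nabla v\le\tfrac12(|\nabla u|^2+|\nabla v|^2)$; your algebraic splitting
\[
\mathcal{M}_\varrho(v)\langle\varrho'_{\mathcal H}(u)-\varrho'_{\mathcal H}(v),u-v\rangle+\bigl(\mathcal{M}_\varrho(u)-\mathcal{M}_\varrho(v)\bigr)\langle\varrho'_{\mathcal H}(u),u-v\rangle
\]
is cleaner and needs only convexity of $\varrho_{\mathcal H}$ plus strict monotonicity of $\varrho'_{\mathcal H}$. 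In $(iv)$ the paper gives a self-contained Fatou/Vitali argument, whereas you reduce to the known $(S_+)$-property of $\varrho'_{\mathcal H}$ from the Musielak--Orlicz literature; this is legitimate and much shorter. In $(vi)$ the paper verifies convexity of $\widehat{\mathcal M}$ via Proposition~\ref{Prop:2.1}, while your observation that $\widehat{\mathcal M}'=\mathcal M$ is nondecreasing already yields convexity directly. One small correction in $(iv)$: the bound $\mathcal M(t)\ge\kappa_1 t^{\alpha_1-1}$ does \emph{not} by itself keep $\mathcal M_\varrho(u_n)$ away from zero (it vanishes as $t\to 0^+$), and boundedness of $\{u_n\}$ only bounds $\varrho_{\mathcal H}(u_n)$ from above. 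The clean fix is to invoke $\mathcal M(t)\ge m_0>0$ directly from assumption $(M)$, which gives $\mathcal M_\varrho(u_n)\ge m_0$ uniformly and makes the separate handling of $u=0$ unnecessary.
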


\begin{proof}
$(i)$ Using $(M)$ and Proposition \ref{Prop:2.2a}, it reads
\begin{equation}\label{e3.8}
\mathcal{K}(u)=\int^{\varrho_{\mathcal{H}}(u)}_{0}\mathcal{M}(s)ds \geq \frac{\kappa_{1}}{\alpha_{1} (q^+)^{\alpha_{1}}}\|u\|_{1,\mathcal{H},0}^{\alpha_{1} p^{-}},
\end{equation}
which implies that $\mathcal{K}$  is coercive.\\
$(ii)$ Using the mean value theorem, it reads
\begin{equation}\label{e3.11d}
\begin{split}
\langle \mathcal{K}^{\prime}(u),\varphi \rangle&\\
=&\lim_{t \to 0}\biggl(\mathcal{M}_{\varrho}(u+\delta t\varphi) \times \int_{\Omega}\bigg(|\nabla(u+\delta t\varphi)|^{p(x)-2}\nabla(u+\delta t\varphi)\\
&+\mu(x)|\nabla(u+\delta t\varphi)|^{q(x)-2}\nabla(u+\delta t\varphi)\biggl) \cdot \nabla \varphi dx\biggl), \\
\end{split}
\end{equation}
for all $u, \varphi \in W_0^{1,\mathcal{H}}(\Omega)$, and $0\leq \delta \leq 1$.\\
Let
\begin{equation}\label{e3.11e}
\begin{split}
  \Theta(u+\delta t\varphi) :=& \biggl(|\nabla(u+\delta t\varphi)|^{p(x)-2}\nabla(u+\delta t\varphi) \\
  &+\mu(x)|\nabla(u+\delta t\varphi))|^{q(x)-2}\nabla(u+\delta t\varphi)\biggl) \cdot \nabla \varphi.
\end{split}
\end{equation}
Applying the Young's inequality, we obtain
\begin{align}\label{e3.11f}
  |\Theta(u+\delta t\varphi)| & \leq \hat{c}\biggl(|\nabla u|^{p(x)}+|\nabla \varphi|^{p(x)} + |\mu|^{q_{+}}_{\infty}|\nabla u|^{q(x)}+ |\nabla \varphi|^{q(x)}\biggl),  \nonumber \\
\end{align}
where $\hat{c}:=\frac{2^{q^+}(q^+-1)+1}{2q^-}$. Thanks to the relation $L^{q(x)}(\Omega) \subset L^{p(x)}(\Omega)\subset L^{1}(\Omega)$ the right-hand side of (\ref{e3.11f}) is summable over $\Omega$. Therefore, using the assumption $(M)$ and the Lebesgue's dominated convergence theorem together provides
\begin{align}\label{e3.11g}
\langle \mathcal{K}^{\prime}(u),\varphi \rangle & =\left(\lim_{t \to 0}\mathcal{M}_{\varrho}(u+\delta t\varphi)
\times \int_{\Omega}\lim_{t \to 0}\{|\nabla (u+\delta t\varphi)|^{p(x)-2}\nabla(u+\delta t\varphi)  \right. \nonumber \\
&\left. +\mu(x)|\nabla(u+\delta t\varphi))|^{q(x)-2}\nabla(u+\delta t\varphi)\}\cdot\nabla \varphi dx\right) \nonumber \\
& =\mathcal{M}_{\varrho}(u)\int_{\Omega}(|\nabla u|^{p(x)-2}\nabla u+\mu(x)|\nabla u|^{q(x)-2}\nabla u)\cdot \nabla \varphi dx.
\end{align}
The linearity of the formula (\ref{e3.11g}) follows due to the linearity of integration, and the fact it is linear in $\nabla \varphi$.\\
Next, we show that $\mathcal{K}^{\prime}$ with the formula (\ref{e3.11g}) is bounded. Then using H\"{o}lder's inequality, Proposition \ref{Prop:2.2bb}, Remark \ref{Rem:2.1a}, and the involved embeddings it follows
\begin{align}\label{e3.11ga}
|\langle \mathcal{K}^{\prime}(u),\varphi \rangle| & =\bigg|\mathcal{M}_{\varrho}(u)\int_{\Omega}(|\nabla u|^{p(x)-2}\nabla u+\mu(x)|\nabla u|^{q(x)-2}\nabla u)\cdot \nabla \varphi dx\bigg| \nonumber \\
& \leq \frac{\kappa_{2}}{(p^-)^{\alpha_{2}-1}}\|u\|_{1,\mathcal{H},0}^{(\alpha_{2}-1) p^{-}\vee q^{+}}\left(||\nabla u|^{p(x)-1}|_{\frac{p(x)}{p(x)-1}}|\nabla \varphi|_{p(x)} \right. \nonumber \\
& \left. +|\mu|_{\infty}||\nabla u|^{q(x)-1}|_{\frac{q(x)}{q(x)-1}}|\nabla \varphi|_{q(x)} \right)\nonumber \\
& \leq \frac{\kappa_{2}}{(p^-)^{\alpha_{2}-1}}\|u\|_{1,\mathcal{H},0}^{(\alpha_{2}-1) p^{-}\vee q^{+}}\left(\|u\|_{1,\mathcal{H},0}^{p_{+}-1}+|\mu|_{\infty}\|u\|_{1,\mathcal{H},0}^{q_{+}-1} \right)\|\varphi\|_{1,\mathcal{H},0}\nonumber \\
& \leq \frac{2|\mu|_{\infty}\kappa_{2}}{(p^-)^{\alpha_{2}-1}}\|u\|_{1,\mathcal{H},0}^{\tau}\|\varphi\|_{1,\mathcal{H},0}.
\end{align}
 where $\tau:=\max\{(\alpha_{2}-1)(p^{-}\vee q^{+})+(p_{+}-1), (\alpha_{2}-1)(p^{-}\vee q^{+})+(q_{+}-1)\}$. Therefore,
\begin{align}\label{e3.11gb}
\|\mathcal{K}^{\prime}(u)\|_{W_0^{1,\mathcal{H}}(\Omega)^*}=\sup_{\varphi \in W_0^{1,\mathcal{H}}(\Omega),\|\varphi\|_{1,\mathcal{H},0}\leq 1}|\langle \mathcal{K}^{\prime}(u),\varphi \rangle| \leq \frac{|\mu|_{\infty}\kappa_{2}}{(p^-)^{\alpha_{2}-1}}\|u\|_{1,\mathcal{H},0}^{\tau}, \quad \forall u \in W_0^{1,\mathcal{H}}(\Omega)
\end{align}
hence $\mathcal{K}^{\prime}$ is bounded. Therefore, $\mathcal{K}$ is G\^{a}teaux differentiable with the derivative given by the formula (\ref{e3.11g}).\\

Next, we continue with the continuity of $\mathcal{K}^{\prime}$. In doing so, for a sequence $(u_n) \subset W_0^{1,\mathcal{H}}(\Omega)$, assume that $u_n \to u$ in $W_0^{1,\mathcal{H}}(\Omega)$. Then using Proposition \ref{Prop:2.2bc}, we have
\begin{align}\label{e3.11h}
&|\langle \mathcal{K}^{\prime}(u_n)-\mathcal{K}^{\prime}(u),v \rangle| \nonumber \\
& \leq \int_{\Omega}\bigg|\mathcal{M}_{\varrho}(u_n)|\nabla u_n|^{p(x)-2}\nabla u_n-\mathcal{M}_{\varrho}(u)|\nabla u|^{p(x)-2}\nabla u\bigg| |\nabla v| dx \nonumber \\
& + \int_{\Omega}\mu(x)\bigg|\mathcal{M}_{\varrho}(u_n)|\nabla u_n|^{q(x)-2}\nabla u_n-\mathcal{M}_{\varrho}(u)|\nabla u|^{q(x)-2}\nabla u\bigg| |\nabla v| dx \nonumber \\
& \leq \int_{\Omega}\left\{||\nabla u_n|^{p(x)-2}\nabla u_n-|\nabla u|^{p(x)-2}\nabla u|\times (G_{n}+\mathcal{M}_{\varrho}(u_n))+G_{n}\right\} |\nabla v| dx \nonumber \\
& + \int_{\Omega}\mu(x)\left\{||\nabla u_n|^{q(x)-2}\nabla u_n-|\nabla u|^{q(x)-2}\nabla u|\times (G_{n}+\mathcal{M}_{\varrho}(u_n))+G_{n}\right\} |\nabla v| dx \nonumber \\
& \leq (G_{n}+\mathcal{M}_{\varrho}(u_n))\int_{\Omega}||\nabla u_n|^{p(x)-2}\nabla u_n-|\nabla u|^{p(x)-2}\nabla u| |\nabla v| dx + G_{n}\int_{\Omega} |\nabla v| dx\nonumber \\
& + (G_{n}+\mathcal{M}_{\varrho}(u_n))\int_{\Omega}\mu(x)||\nabla u_n|^{q(x)-2}\nabla u_n-|\nabla u|^{q(x)-2}\nabla u||\nabla v| dx +G_{n}\int_{\Omega} |\nabla v| dx.
\end{align}
where $G_{n}:=|\mathcal{M}_{\varrho}(u_n)-\mathcal{M}_{\varrho}(u)|$.
Now, if we apply H\"{o}lder's inequality and consider the embedding $L^{\mathcal{H}}(\Omega)\hookrightarrow L^{q(x)}_{\mu}(\Omega)$ and Propositions \ref{Prop:2.2a} and \ref{Prop:2.7a}, it reads
\begin{align}\label{e3.11k}
&|\langle \mathcal{K}^{\prime}(u_n)-\mathcal{K}^{\prime}(u),v \rangle| \nonumber \\
& \leq (G_{n}+\mathcal{M}_{\varrho}(u_n)) \bigg| ||\nabla u_n|^{p(x)-2}\nabla u_n-|\nabla u|^{p(x)-2}\nabla u| \bigg|_{\frac{p(x)}{p(x)-1}}|\nabla v|_{p(x)}+G_{n}|\Omega||\nabla v|_{p(x)}\nonumber \\
&+(G_{n}+\mathcal{M}_{\varrho}(u_n))\bigg|\mu(x)^{\frac{q(x)-1}{q(x)}} ||\nabla u_n|^{q(x)-2}\nabla u_n-|\nabla u|^{q(x)-2}\nabla u| \bigg|_{\frac{q(x)}{q(x)-1}}|\mu(x)^{\frac{1}{q(x)}}|\nabla v||_{q(x)} \nonumber \\
& +G_{n}|\Omega||\nabla v|_{p(x)},
\end{align}
and therefore
\begin{align}\label{e3.11hh}
\|\mathcal{K}^{\prime}(u_n)-\mathcal{K}^{\prime}(u)\|_{W_0^{1,\mathcal{H}}(\Omega)^*}=\sup_{v \in W_0^{1,\mathcal{H}}(\Omega),\|v\|_{1,\mathcal{H},0}\leq 1}|\langle \mathcal{K}^{\prime}(u_n)-\mathcal{K}^{\prime}(u),v \rangle| \to 0.
\end{align}

Note that the result (\ref{e3.11hh}) follows due to the following:\\
Since $u_n \to u$ in $W_0^{1,\mathcal{H}}(\Omega)$, by the embeddings $L^{\mathcal{H}}(\Omega)\hookrightarrow L^{q(x)}_{\mu}(\Omega)$, $W_0^{1,\mathcal{H}}(\Omega) \hookrightarrow L^{\mathcal{H}}(\Omega)$ and $W_0^{1,\mathcal{H}}(\Omega) \hookrightarrow L^{p(x)}(\Omega)$ we have
\begin{equation}\label{e3.11hk}
\lim_{n \to \infty}\int_{\Omega}|\nabla u_n|^{p(x)}dx=\int_{\Omega}|\nabla u|^{p(x)}dx,
\end{equation}
and
\begin{equation}\label{e3.11hm}
\lim_{n \to \infty}\int_{\Omega}\mu(x)|\nabla u_n|^{q(x)}dx=\int_{\Omega}\mu(x)|\nabla u|^{q(x)}dx.
\end{equation}
However, by Vitali’s Theorem \cite[Theorem 4.5.4]{bogachev2007measure}, (\ref{e3.11hk}) and  (\ref{e3.11hm}) means $|\nabla u_n| \to |\nabla u|$ and $\mu(x)^{\frac{1}{q(x)}}|\nabla u_n| \to \mu(x)^{\frac{1}{q(x)}}|\nabla u|$ in measure in $\Omega$  and the sequences $\left\{|\nabla u_n|^{p(x)}\right\}_{n}$ and $\left\{\mu(x)|\nabla u_n|^{q(x)}\right\}_{n}$ are uniformly integrable over $\Omega$. Now, lets' consider the inequalities

\begin{equation}\label{e3.11hn}
||\nabla u_n|^{p(x)-2}\nabla u_n-|\nabla u|^{p(x)-2}\nabla u|^{\frac{p(x)}{p(x)-1}}\leq 2^{\frac{p^+}{p^--1}-1}(|\nabla u_n|^{p(x)}+|\nabla u|^{p(x)}),
\end{equation}
and
\begin{equation}\label{e3.11hnm}
\mu(x)||\nabla u_n|^{q(x)-2}\nabla u_n-|\nabla u|^{q(x)-2}\nabla u|^{\frac{q(x)}{q(x)-1}}\leq 2^{\frac{q^+}{q^--1}-1}\mu(x)(|\nabla u_n|^{q(x)}+|\nabla u|^{q(x)}).
\end{equation}
Therefore, the families $\left\{||\nabla u_n|^{p(x)-2}\nabla u_n-|\nabla u|^{p(x)-2}\nabla u|^{\frac{p(x)}{p(x)-1}}\right\}_{n}$ and $\left\{\mu(x)||\nabla u_n|^{q(x)-2}\nabla u_n-|\nabla u|^{q(x)-2}\nabla u|^{\frac{q(x)}{q(x)-1}}\right\}_{n}$ are also uniformly integrable over $\Omega$. Thus, by Vitali’s Theorem, $|\nabla u|^{p(x)-2}\nabla u$ and $\mu(x)|\nabla u|^{q(x)-2}\nabla u$ are integrable and $|\nabla u_n|^{p(x)-2}\nabla u_n \to |\nabla u|^{p(x)-2}\nabla u$ and $\mu(x)|\nabla u_n|^{q(x)-2}\nabla u_n \to \mu(x)|\nabla u|^{q(x)-2}\nabla u$ in $L^1(\Omega)$, and hence
\begin{equation}\label{e3.11hnk}
 \bigg| ||\nabla u_n|^{p(x)-2}\nabla u_n-|\nabla u|^{p(x)-2}\nabla u| \bigg|_{\frac{p(x)}{p(x)-1}} \to 0,
\end{equation}
and
\begin{equation}\label{e3.11hns}
\bigg|\mu(x)^{\frac{q(x)-1}{q(x)}} ||\nabla u_n|^{q(x)-2}\nabla u_n-|\nabla u|^{q(x)-2}\nabla u| \bigg|_{\frac{q(x)}{q(x)-1}} \to 0.
\end{equation}
Lastly, by the assumption $(M)$ and Proposition \ref{Prop:2.7}, $(G_{n}+\mathcal{M}_{\varrho}(u_n))$ is bounded for $n=1,2,...$, and
\begin{equation}\label{e3.11ha}
G_{n}=|\mathcal{M}_{\varrho}(u_n)-\mathcal{M}_{\varrho}(u)|\to 0 \text{ as } n \to \infty.
\end{equation}
Therefore the result (\ref{e3.11hh}) follows. Putting all these together, we infer that $\mathcal{K}^{\prime}$ is continuously G\^{a}teaux differentiable whose derivative is given by the formula (\ref{e4.12ab}).\\

$(iii)$ Now, we show that $\mathcal{K}^{\prime}$ is strictly monotone. To do so, we argue similarly to \cite{massar2024existence}.
Let $u,v \in W_0^{1,\mathcal{H}}(\Omega)$ with $u \neq v$. Without loss of generality, we can assume that $\varrho_{\mathcal{H}}(u)\geq \varrho_{\mathcal{H}}(v)$. Then, $\mathcal{M}_{\varrho}(u)\geq\mathcal{M}_{\varrho}(v)$ due to $(M)$ and Proposition \ref{Prop:2.7}.\\
Noticing that
\begin{equation}\label{e3.11ka}
0 \leq (\nabla u- \nabla v)^{2} \Rightarrow\nabla u \cdot \nabla v\leq 2^{-1}(|\nabla u|^{2}+|\nabla v|^{2}),
\end{equation}
we obtain
\begin{align}\label{e3.11kb}
\langle\varrho_{\mathcal{H}}^{\prime}(u),u-v\rangle&=\int_{\Omega}(|\nabla u|^{p(x)-2}\nabla u+\mu(x)|\nabla u|^{q(x)-2}\nabla u)\cdot \nabla(u-v) dx \nonumber \\
&=\int_{\Omega}\left(|\nabla u|^{p(x)}+\mu(x)|\nabla u|^{q(x)}-(|\nabla u|^{p(x)-2}+\mu(x)|\nabla u|^{q(x)-2}) \nabla u \cdot \nabla v\right)dx \nonumber \\
&\geq  2^{-1}\int_{\Omega}(|\nabla u|^{p(x)-2}+\mu(x)|\nabla u|^{q(x)-2})(|\nabla u|^{2}-|\nabla v|^{2})dx,
\end{align}
and
\begin{align}\label{e3.11kc}
\langle\varrho_{\mathcal{H}}^{\prime}(v),v-u\rangle&=\int_{\Omega}(|\nabla v|^{p(x)-2}\nabla v+\mu(x)|\nabla v|^{q(x)-2}\nabla v)\cdot \nabla(v-u) dx \nonumber \\
&=\int_{\Omega}\left(|\nabla v|^{p(x)}+\mu(x)|\nabla v|^{q(x)}-(|\nabla v|^{p(x)-2}+\mu(x)|\nabla v|^{q(x)-2}) \nabla u \cdot \nabla v\right)dx \nonumber \\
&\geq  2^{-1}\int_{\Omega}(|\nabla v|^{p(x)-2}+\mu(x)|\nabla v|^{q(x)-2})(|\nabla v|^{2}-|\nabla u|^{2})dx.
\end{align}
Next, we partition $\Omega$ into $\Omega_{\geq}=\{x \in \Omega: |\nabla u|\geq |\nabla v| \}$ and $\Omega_{<}=\{x \in \Omega: |\nabla u|< |\nabla v| \}$. Hence, using (\ref{e3.11kb}), (\ref{e3.11kc}) and $(M)$, we have
\begin{align}\label{e3.11kd}
I_{1}(u)&:=\mathcal{M}_{\varrho}(u)\langle\varrho_{\mathcal{H}}^{\prime}(u),u-v\rangle\nonumber \\
&=\mathcal{M}_{\varrho}(u)\int_{\Omega_{\geq}}\left(|\nabla u|^{p(x)}+\mu(x)|\nabla u|^{q(x)}+\mu(x)|\nabla u|^{r(x)}-(|\nabla u|^{p(x)-2}+\mu(x)|\nabla u|^{q(x)-2}) \nabla u \cdot \nabla v\right)dx \nonumber \\
& \geq \frac {\mathcal{M}_{\varrho}(u)}{2}\int_{\Omega_{\geq}}(|\nabla u|^{p(x)-2}+\mu(x)|\nabla u|^{q(x)-2})(|\nabla u|^{2}-|\nabla v|^{2})dx\nonumber\\
&- \frac {\mathcal{M}_{\varrho}(v)}{2}\int_{\Omega_{\geq}}(|\nabla v|^{p(x)-2}+\mu(x)|\nabla v|^{q(x)-2})(|\nabla u|^{2}-|\nabla v|^{2})dx\nonumber\\
& \geq \frac {\mathcal{M}_{\varrho}(v)}{2}\int_{\Omega_{\geq}}\left((|\nabla u|^{p(x)-2}+\mu(x)|\nabla u|^{q(x)-2})- (|\nabla v|^{p(x)-2}+\mu(x)|\nabla v|^{q(x)-2})\right) \nonumber \\
& \times (|\nabla u|^{2}-|\nabla v|^{2})dx\nonumber\\
& \geq \frac {m_0}{2}\int_{\Omega_{\geq}}\left((|\nabla u|^{p(x)-2}+\mu(x)|\nabla u|^{q(x)-2})- (|\nabla v|^{p(x)-2}+\mu(x)|\nabla v|^{q(x)-2})\right)\nonumber\\
& \times (|\nabla u|^{2}-|\nabla v|^{2})dx\nonumber\\
& \geq 0,
\end{align}
and similarly
\begin{align}\label{e3.11ke}
I_{2}(v)&:=\mathcal{M}_{\varrho}(v)\langle\varrho_{\mathcal{H}}^{\prime}(v),v-u\rangle\nonumber \\
&=\mathcal{M}_{\varrho}(v)\int_{\Omega_{<}}\left(|\nabla v|^{p(x)}+\mu(x)|\nabla v|^{q(x)}-(|\nabla v|^{p(x)-2}+\mu(x)|\nabla v|^{q(x)-2}) \nabla u \cdot \nabla v\right)dx \nonumber \\
&\geq \frac {m_0}{2}\int_{\Omega_{<}}\left((|\nabla u|^{p(x)-2}+\mu(x)|\nabla v|^{q(x)-2})- (|\nabla v|^{p(x)-2}+\mu(x)|\nabla v|^{q(x)-2})\right)\nonumber\\
& \times (|\nabla u|^{2}-|\nabla v|^{2})dx \nonumber \\
& \geq 0.
\end{align}
Note that
\begin{align}\label{e3.11kf}
\langle \mathcal{K}^{\prime}(u)-\mathcal{K}^{\prime}(v),u-v \rangle =\langle \mathcal{K}^{\prime}(u),u-v \rangle+\langle \mathcal{K}^{\prime}(v),v-u \rangle = I_{1}(u)+I_{2}(v)\geq 0.
\end{align}
However, we discard the case of $\langle \mathcal{K}^{\prime}(u)-\mathcal{K}^{\prime}(v),u-v \rangle =0$ since this would eventually imply that $\partial_{x_i}u=\partial_{x_i}v$ for $i=1,2,...,N$, which would contradict the assumption that $u \neq v $ in $W_0^{1,\mathcal{H}}(\Omega)$. Thus,
\begin{align}\label{e3.11kg}
\langle \mathcal{K}^{\prime}(u)-\mathcal{K}^{\prime}(v),u-v \rangle >0.
\end{align}

$(iv)$ Let $(u_{n})\subset W_0^{1,\mathcal{H}}(\Omega)$ be a sequence satisfying
\begin{equation}\label{e3.2a}
u_{n} \rightharpoonup u \in W_0^{1,\mathcal{H}}(\Omega),
\end{equation}
\begin{equation}\label{e3.2b}
\limsup_{n\rightarrow\infty}\langle \mathcal{K}^{\prime}(u_{n}),u_{n}-u\rangle\leq 0.
\end{equation}
We shall show that $u_{n} \to u \in W_0^{1,\mathcal{H}}(\Omega)$. \\
By (\ref{e3.2a}), we have
\begin{equation}\label{e3.2c}
\lim_{n\rightarrow\infty}\langle \mathcal{K}^{\prime}(u),u_{n}-u\rangle=0.
\end{equation}
Using By (\ref{e3.2b}) and By (\ref{e3.2c}), it reads
\begin{equation}\label{e3.2d}
\limsup_{n\rightarrow\infty}\langle \mathcal{K}^{\prime}(u_{n})-\mathcal{K}^{\prime}(u),u_{n}-u\rangle\leq 0.
\end{equation}
However, if we also consider that $\mathcal{K}^{\prime}$ is strictly monotone, we have
\begin{equation}\label{e3.2e}
\lim_{n\rightarrow\infty}\langle \mathcal{K}^{\prime}(u_{n})-\mathcal{K}^{\prime}(u),u_{n}-u\rangle=\lim_{n\rightarrow\infty}\langle \mathcal{K}^{\prime}(u),u_{n}-u\rangle=0.
\end{equation}
Therefore,
\begin{align}\label{e3.2f}
&\lim_{n\rightarrow\infty}\langle \mathcal{K}^{\prime}(u_n)-\mathcal{K}^{\prime}(u),u_n-u \rangle\nonumber \\
& = \lim_{n\rightarrow\infty}\left(\int_{\Omega}\left(\mathcal{M}_{\varrho}(u_n)|\nabla u_n|^{p(x)-2}\nabla u_n-\mathcal{M}_{\varrho}(u)|\nabla u|^{p(x)-2}\nabla u\right) \nabla(u_n-u) dx \right. \nonumber \\
& \left. + \int_{\Omega}\mu(x)\left(\mathcal{M}_{\varrho}(u_n)|\nabla u_n|^{q(x)-2}\nabla u_n-\mathcal{M}_{\varrho}(u)|\nabla u|^{q(x)-2}\nabla u\right) \nabla(u_n-u) dx\right)=0.
\end{align}
Recall the inequality \cite{lindqvist2019notes}
\begin{equation}\label{e3.2g}
(|\xi|^{h-2}\xi-|\eta|^{h-2}\eta)\cdot(\xi -\eta)>0, \quad \forall \xi ,\eta \in \mathbb{R}^{N},\, h\geq1.
\end{equation}
Then using $(M)$, (\ref{e3.2f}) and (\ref{e3.2g}) together imply that $\nabla u_n \to \nabla u$ and $u_n \to u$ in measure in $\Omega$. Then, by Riesz theorem, there are a subsequences, not relabelled, that
converges pointwise a.e. on $\Omega$ to $\nabla u$ and $u$, respectively \cite{royden2010real}. Hence, if we let
\begin{equation}\label{e3.2h}
\gamma_n(x):=\frac{|\nabla u_n|^{p(x)}}{p(x)}+\mu(x)\frac{|\nabla u_n|^{q(x)}}{q(x)},
\end{equation}
and
\begin{equation}\label{e3.2k}
\gamma(x):=\frac{|\nabla u|^{p(x)}}{p(x)}+\mu(x)\frac{|\nabla u|^{q(x)}}{q(x)},
\end{equation}
then $|\gamma_n(x)-\gamma(x)| \to 0$ a.e. on $\Omega$.\\
On the other hand, applying the Young's inequality it reads
\begin{align}\label{e3.2m}
\langle \mathcal{K}^{\prime}(u_n),u_n-u \rangle &=\mathcal{M}_{\varrho}(u_n)\int_{\Omega}\left(|\nabla u_n|^{p(x)-2}\nabla u_n+\mu(x)|\nabla u_n|^{q(x)-2}\nabla u_n\right) \nabla(u_n-u) dx  \nonumber \\
& \geq \mathcal{M}_{\varrho}(u_n)\int_{\Omega}\left(|\nabla u_n|^{p(x)}+\mu(x)|\nabla u_n|^{q(x)}\right) dx  \nonumber \\
& - \mathcal{M}_{\varrho}(u_n)\int_{\Omega}\left(|\nabla u_n|^{p(x)-1}|\nabla u|+\mu(x)|\nabla u_n|^{q(x)-1}|\nabla u|\right)dx \nonumber \\
& \geq \mathcal{M}_{\varrho}(u_n)\int_{\Omega}\left(\frac{|\nabla u_n|^{p(x)}}{p(x)}+\mu(x)\frac{|\nabla u_n|^{q(x)}}{q(x)}\right) dx  \nonumber \\
& - \mathcal{M}_{\varrho}(u_n)\int_{\Omega}\left\{\frac{p(x)-1}{p(x)}|\nabla u_n|^{p(x)}+\frac{1}{p(x)}|\nabla u|^{p(x)} \right. \nonumber \\
& \left. +\mu(x)\left(\frac{q(x)-1}{q(x)}|\nabla u_n|^{q(x)}+\frac{1}{q(x)}|\nabla u|^{q(x)} \right)\right\}dx \nonumber \\
& =\int_{\Omega}\mathcal{M}_{\varrho}(u_n)(\gamma_n(x)-\gamma(x))dx.
\end{align}
Then using (\ref{e3.2b}),(\ref{e3.2m}), Fataou's lemma, and Proposition \ref{Prop:2.7} together provides
\begin{equation}\label{e3.2n}
\lim_{n\rightarrow\infty}\int_{\Omega}\mathcal{M}_{\varrho}(u_n)(\gamma_n(x)-\gamma(x))dx=0.
\end{equation}
By the assumption $(M)$, we infer from (\ref{e3.2n}) that
\begin{equation}\label{e3.2p}
\lim_{n\rightarrow\infty}\int_{\Omega}\gamma_n(x)dx=\int_{\Omega}\gamma(x)dx.
\end{equation}

By the Vitali Convergence theorem \cite[Theorem 4.5.4]{bogachev2007measure}, (\ref{e3.2p}) means $|\nabla u_n| \to |\nabla u|$ and $\mu(x)^{\frac{1}{q(x)}}|\nabla u_n| \to \mu(x)^{\frac{1}{q(x)}}|\nabla u|$ in measure in $\Omega$  and the sequences $\left\{|\nabla u_n|^{p(x)}\right\}_{n}$ and $\left\{\mu(x)|\nabla u_n|^{q(x)}\right\}_{n}$ are uniformly integrable over $\Omega$, and hence, the functions family $\{\gamma_n\}_{n}=\left\{\frac{|\nabla u_n|^{p(x)}}{p(x)}+\mu(x)\frac{|\nabla u_n|^{q(x)}}{q(x)}\right\}_{n}$ is uniformly integrable over $\Omega$. For the rest, following the same arguments as developed in the proof of Lemma \ref{Lem:4.2}-$(ii)$ shows that the family $\left\{\frac{|\nabla (u_n-u)|^{p(x)}}{p(x)}+\mu(x)\frac{|\nabla (u_n-u)|^{q(x)}}{q(x)}\right\}_{n}$ is also uniformly integrable over $\Omega$.
Lastly, using the Vitali Convergence theorem once more, we obtain
\begin{equation}\label{e3.2s}
\lim_{n \to \infty}\int_{\Omega}\left(\frac{|\nabla(u_n-u)|^{p(x)}}{p(x)}+\mu(x)\frac{|\nabla(u_n-u)|^{q(x)}}{q(x)}\right)dx=0,
\end{equation}
which implies, by Proposition \ref{Prop:2.2a}, that $u_{n} \to u \in W_0^{1,\mathcal{H}}(\Omega)$.\\
$(v)$ Using $(M)$ and Remark \ref{Rem:2.1a}, it reads
\begin{align}\label{e3.21a}
\langle \mathcal{K}^{\prime}(u),u\rangle& = \mathcal{M}_{\varrho}(u)\int_{\Omega}(|\nabla u|^{p(x)}+\mu(x)|\nabla u|^{q(x)})dx \geq \frac{\kappa_{1}}{(q^+)^{\alpha_{1}}}\|u\|_{1,\mathcal{H},0}^{\alpha_{1}p^{-}}
\end{align}
or
\begin{align}\label{e3.21b}
&\frac{\langle \mathcal{K}^{\prime}(u),u\rangle }{\|u\|_{1,\mathcal{H},0}}\geq \frac{\kappa_{1}}{(q^+)^{\alpha_{1}}}\|u\|_{1,\mathcal{H},0}^{\alpha_{1}p^{-}-1},
\end{align}
which means that $\frac{\langle \mathcal{K}^{\prime}(u),u\rangle }{\|u\|_{1,\mathcal{H},0}} \to \infty$ as $\|u\|_{1,\mathcal{H},0}\to \infty$, so $\mathcal{K}^{\prime}$ is coercive.\\
Moreover, since $\mathcal{K}^{\prime}$ is also strictly monotone, $\mathcal{K}^{\prime}$ is an injection. According to Minty-Browder theorem \cite{zeidler2013nonlinear}, these two properties together implies that $\mathcal{K}^{\prime }$ is a surjection. Therefore, $\mathcal{K}^{\prime }$ has an inverse mapping $(\mathcal{K}^{\prime })^{-1}:W_0^{1,\mathcal{H}}(\Omega)^{*}\rightarrow W_0^{1,\mathcal{H}}(\Omega)$.\\
To show that $(\mathcal{K}^{\prime })^{-1}$ is continuous, let $(u_{n}^{\ast }),u^{\ast }\in W_0^{1,\mathcal{H}}(\Omega)^{*}$ with $
u_{n}^{\ast }\rightarrow u^{\ast }$, and let $(\mathcal{K}^{\prime })^{-1}(u_{n}^{\ast })=u_{n},(\mathcal{K}^{\prime})^{-1}(u^{\ast
})=u$. Then, $\mathcal{K}^{\prime} (u_{n})=u_{n}^{\ast }$ and $\mathcal{K}^{\prime}(u)=u^{\ast }$ which
means, by the coercivity of $\mathcal{K}^{\prime}$, that $( u_{n}) $ is bounded in $W_0^{1,\mathcal{H}}(\Omega)$. Thus, there exist $\hat{u}\in W_0^{1,\mathcal{H}}(\Omega)$ and a subsequence, not relabelled, $(u_{n})\subset W_0^{1,\mathcal{H}}(\Omega)$ such that $u_{n}\rightharpoonup \hat{u}$ in $W_0^{1,\mathcal{H}}(\Omega)$. However, by the uniqueness of the weak limit, $\hat{u}=u$ in $W_0^{1,\mathcal{H}}(\Omega)$. Additionally, since $u_{n}^{\ast}\rightarrow u^{\ast}$ in $W_0^{1,\mathcal{H}}(\Omega)^{*}$, it reads
\begin{equation}\label{e3.21c}
\lim_{n\rightarrow\infty}\langle \mathcal{K}^{\prime}(u_{n})-\mathcal{K}^{\prime}(u),u_{n}-u\rangle=\lim_{n\rightarrow\infty}\langle u_{n}^{\ast }-u^{\ast },u_{n}-u\rangle = 0.
\end{equation}
Considering that $\mathcal{K}^{\prime}$ is of type $(S_{+})$, we have $u_{n} \rightarrow u$ in $W_0^{1,\mathcal{H}}(\Omega)$. This concludes that $(\mathcal{K}^{\prime })^{-1}:W_0^{1,\mathcal{H}}(\Omega)^{*}\rightarrow W_0^{1,\mathcal{H}}(\Omega)$ is continuous.\\

$(vi)$ Lastly, we shall show that $\mathcal{K}$ is sequentially weakly lower semi-continuous.\\
Since $\mathcal{K}^{\prime}$ is continuously G\^{a}teaux differentiable, it is enough to show that $\mathcal{K}$ is convex. To show this, we adopt the approach used in \cite[Lemma 3.4]{avci2024existence}, and for the sake of the completeness we provide the some details. Since $\mathcal{K}(u)=\widehat{\mathcal{M}}_{\varrho}(u)$, we first show that $\widehat{\mathcal{M}}$ is convex and increasing over $(0,\infty)$. By Proposition \ref{Prop:2.1}, $\widehat{\mathcal{M}}$ is convex if
\begin{equation*}
\widehat{\mathcal{M}}((1-\epsilon)t+\epsilon s)< (1-\epsilon)\beta_1+\epsilon \beta_2, \,\,\,\, 0<\epsilon<1,
\end{equation*}
whenever $\widehat{\mathcal{M}}(t)<\beta_1$ and $\widehat{\mathcal{M}}(s)<\beta_2$, for all $t,s,\beta_1,\beta_2 \in (0,\infty)$. Thus, applying $(M)$, it reads
\begin{align}\label{e3.21d}
\widehat{\mathcal{M}}(t)\leq \frac{\kappa_2}{\alpha_2}t^{\alpha_{2}}< \frac{2^{\alpha_{2}-1}\kappa_2}{\alpha_2}t^{\alpha_{2}}:=\beta_1 \text{ and }
\widehat{\mathcal{M}}(s)\leq \frac{\kappa_2}{\alpha_2}s^{\alpha_{2}}< \frac{2^{\alpha_{2}-1}\kappa_2}{\alpha_2}s^{\alpha_{2}}:=\beta_2.
\end{align}
Therefore,
\begin{align}\label{e3.21e}
\widehat{\mathcal{M}}((1-\epsilon)t+\epsilon s)\leq \frac{\kappa_2}{\alpha_2}[(1-\epsilon)t+\epsilon s]^{\alpha_{2}}< (1-\epsilon)\frac{2^{\alpha_{2}-1}\kappa_2}{\alpha_2}t^{\alpha_{2}}+\epsilon\frac{2^{\alpha_{2}-1}\kappa_2}{\alpha_2}s^{\alpha_{2}},
\end{align}
which shows that $\widehat{\mathcal{M}}$ is convex. To show that $\widehat{\mathcal{M}}$ is increasing over $(0,\infty)$, one can just apply the Fundamental Theorem of Calculus and the condition $(M)$. Note also that, since $\widehat{\mathcal{M}}$ is convex over $(0,\infty)$, it is continuous on $(0,\infty)$. Putting all these together; since $\widehat{\mathcal{M}}$ is convex and increasing on $(0,\infty)$, and the functional $\varrho_{\mathcal{H}}$ is convex on $W_0^{1,\mathcal{H}}(\Omega)$, as the composition of these two maps, $\mathcal{K}$ is also convex on $W_0^{1,\mathcal{H}}(\Omega)$.
\end{proof}

\begin{lemma}\label{Lem:4.3}
Assume that $(f_{1})$ holds. Then, $\mathcal{J}$ is a continuously G\^{a}teaux differentiable functional whose G\^{a}teaux derivative $\mathcal{J^{\prime}}:W_0^{1,\mathcal{H}}(\Omega)\rightarrow W_0^{1,\mathcal{H}}(\Omega)^{*}$ given by
\begin{equation}\label{e3.4m}
\langle \mathcal{J^{\prime}}(u),\varphi\rangle=\int_{\Omega}f(x,u)\varphi dx, \quad \forall u, \varphi \in W_0^{1,\mathcal{H}}(\Omega)
\end{equation}
is compact.
\end{lemma}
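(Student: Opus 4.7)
The plan is to handle the three claims separately, with the growth condition $(f_1)$ and the compact embedding $W_0^{1,\mathcal{H}}(\Omega)\hookrightarrow L^{s(\cdot)}(\Omega)$ from Proposition \ref{Prop:2.7a}$(ii)$ doing the heavy lifting. Set $F(x,t)=\int_0^t f(x,s)\,ds$; by $(f_1)$ we obtain the pointwise bound $|F(x,t)|\leq \bar c_1|t|+\tfrac{\bar c_2}{s^-}|t|^{s(x)}$, so that $\mathcal{J}$ is well defined on $W_0^{1,\mathcal{H}}(\Omega)$ via the aforementioned embedding.

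First, to establish Gâteaux differentiability with the stated derivative, I would fix $u,\varphi\in W_0^{1,\mathcal{H}}(\Omega)$ and $\tau_n\to 0$, and apply the mean value theorem to write
\begin{equation*}
\frac{F(x,u+\tau_n\varphi)-F(x,u)}{\tau_n}=f(x,u+\theta_n\tau_n\varphi)\,\varphi,
\end{equation*}
for some $\theta_n(x)\in (0,1)$. Using $(f_1)$ together with the elementary estimate $|u+\theta_n\tau_n\varphi|^{s(x)-1}\leq C(|u|^{s(x)-1}+|\varphi|^{s(x)-1})$ for $|\tau_n|\leq 1$, the integrand is dominated by a function of the form $\bar c_1|\varphi|+C(|u|^{s(x)-1}+|\varphi|^{s(x)-1})|\varphi|$, which is integrable by Hölder and the embedding into $L^{s(\cdot)}(\Omega)$. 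Lebesgue's dominated convergence theorem then yields the formula (\ref{e3.4m}), and linearity and boundedness of the linear functional $\varphi\mapsto \int_\Omega f(x,u)\varphi\,dx$ follow at once from Hölder's inequality applied with the conjugate exponent $s'(x)=s(x)/(s(x)-1)$.

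Next, for compactness of $\mathcal{J}'$, which in particular implies its continuity, I would take any sequence $u_n\rightharpoonup u$ in $W_0^{1,\mathcal{H}}(\Omega)$. By Proposition \ref{Prop:2.7a}$(ii)$ the embedding into $L^{s(\cdot)}(\Omega)$ is compact, so $u_n\to u$ strongly in $L^{s(\cdot)}(\Omega)$. The Nemytskii operator $N_f:L^{s(\cdot)}(\Omega)\to L^{s'(\cdot)}(\Omega)$, $N_f(u)(x):=f(x,u(x))$, is continuous under the growth condition $(f_1)$ (a standard fact for variable exponent spaces), so $f(\cdot,u_n)\to f(\cdot,u)$ in $L^{s'(\cdot)}(\Omega)$. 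For any $\varphi\in W_0^{1,\mathcal{H}}(\Omega)$ with $\|\varphi\|_{1,\mathcal{H},0}\leq 1$, Hölder's inequality and the embedding constant $c_{\mathcal{H}}$ give
\begin{equation*}
|\langle \mathcal{J}'(u_n)-\mathcal{J}'(u),\varphi\rangle|\leq 2\,|f(\cdot,u_n)-f(\cdot,u)|_{s'(\cdot)}\,|\varphi|_{s(\cdot)}\leq 2c_{\mathcal{H}}\,|f(\cdot,u_n)-f(\cdot,u)|_{s'(\cdot)},
\end{equation*}
so taking the supremum over such $\varphi$ yields $\|\mathcal{J}'(u_n)-\mathcal{J}'(u)\|_{W_0^{1,\mathcal{H}}(\Omega)^*}\to 0$. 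This gives both compactness of $\mathcal{J}'$ and, specializing to strongly convergent sequences, its continuity.

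The main technical point is the continuity of the Nemytskii operator $N_f$ from $L^{s(\cdot)}(\Omega)$ to $L^{s'(\cdot)}(\Omega)$ in the variable exponent setting; this is where one needs to be careful, but it follows by the standard argument of extracting an a.e.\ convergent subsequence, dominating $|f(x,u_n)|^{s'(x)}$ via $(f_1)$ by a function involving $|u_n|^{s(x)}$, invoking uniform integrability coming from convergence in $L^{s(\cdot)}(\Omega)$ (Proposition \ref{Prop:2.2}$(iii)$), and then applying Vitali's convergence theorem. The usual subsequence-of-subsequences argument upgrades the convergence from a subsequence to the whole sequence, which completes the proof.
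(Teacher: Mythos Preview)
Your proposal is correct and uses essentially the same ingredients as the paper: the compact embedding $W_0^{1,\mathcal{H}}(\Omega)\hookrightarrow L^{s(\cdot)}(\Omega)$ together with continuity of the Nemytskii operator $N_f:L^{s(\cdot)}(\Omega)\to L^{s'(\cdot)}(\Omega)$ under $(f_1)$. The only cosmetic difference is in the packaging of compactness: the paper writes $\mathcal{J}'=i^*\circ \mathcal{A}_f$, where $i^*:L^{s'(\cdot)}(\Omega)\to W_0^{1,\mathcal{H}}(\Omega)^*$ is the adjoint of the compact embedding $i$ (hence compact) and $\mathcal{A}_f=N_f\circ i$ is continuous, whereas you verify complete continuity directly by taking $u_n\rightharpoonup u$, passing to strong convergence in $L^{s(\cdot)}(\Omega)$, and then estimating $\|\mathcal{J}'(u_n)-\mathcal{J}'(u)\|$ via H\"older. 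Both arguments are standard and equivalent; your direct route has the small advantage of yielding continuity of $\mathcal{J}'$ in the same stroke, while the paper's factorization is slightly cleaner conceptually.
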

\begin{proof} To show that $\mathcal{J}$ is continuously G\^{a}teaux differentiable, one can argue similarly to Lemma \ref{Lem:4.2}, part $(ii)$. Hence we omit that part and continue with showing that  $\mathcal{J^{\prime}}$ is compact.\\
Define the operator $\mathcal{A}_f:W_0^{1,\mathcal{H}}(\Omega)\rightarrow L^{s^{\prime}(x)}(\Omega)$ by
\begin{equation} \label{e3.1a1}
\mathcal{A}_f(u):=f(x,u).
\end{equation}
With this characterization, the operator $\mathcal{A}_f$ is $L^{s^{\prime}(x)}(\Omega)$-norm bounded. Indeed, let $\|u\|_{1,\mathcal{H},0}\leq1$. Using the embeddings $L^{\mathcal{H}}(\Omega)\hookrightarrow L^{s(x)}(\Omega)$, and $W_0^{1,\mathcal{H}}(\Omega)\hookrightarrow L^{\mathcal{H}}(\Omega)$, it reads
\begin{align}\label{e3.1a2}
\int_{\Omega} |\mathcal{A}_f(u)|^{s^\prime(x)} dx&=\int_{\Omega} |f(x,u)|^{s^\prime(x)} dx\leq \int_{\Omega} \bigg|\bar{c}_{1}+ \bar{c}_{2}|u_{n}|^{s(x)-1}\bigg|^{s^\prime(x)}dx  \nonumber \\
&\leq c_1\int_{\Omega} |u|^{s(x)}dx+c_2|\Omega|\nonumber\\
&\leq c_3\int_{\Omega} \left(|u|^{s(x)}+|\nabla u|^{p(x)}+\mu(x)|\nabla u|^{q(x)}\right)dx+c_2|\Omega| \nonumber\\
&\leq c_4\|u\|_{1,\mathcal{H},0}+c_2|\Omega|,
\end{align}
where $c_1,c_2,c_3,c_4>0$ are some real parameters whose values are independent of $u$.\\
Next, let $u_n \to u$ in $W_0^{1,\mathcal{H}}(\Omega)$, and hence $u_n \to u$ in $L^{s(x)}(\Omega)$. Due to the standard arguments, there exist a subsequence $(u_n)$, not relabelled, and a function $\omega$ in $L^{s(x)}(\Omega)$ satisfying
\begin{itemize}
\item $u_n(x) \to u(x)$ a.e. in $\Omega$,
\item $|u_n(x)|\leq \omega(x)$ a.e. in $\Omega$ and for all $n$.
\end{itemize}
By $(f_1)$, we have
\begin{equation} \label{e3.1a3}
f(x,u_n(x))\to f(x,u(x)) \text{  a.e. in } \Omega,
\end{equation}
and
\begin{align} \label{e3.1a4}
|f(x,u_n(x))|& \leq \bar{c}_{1}+\bar{c}_{2}|\omega(x)|^{s(x)-1}.
\end{align}
Using the Young's inequality, it reads
\begin{equation}\label{e3.1a5}
|f(x,u_n(x))| \leq \bar{c}_{1}+\frac{(\bar{c}_{2})^{s_{+}}}{s^-}+\frac{s^+-1}{s^-}|\omega(x)|^{s(x)}.
\end{equation}
Then by the embeddings $L^{\mathcal{H}}(\Omega)\hookrightarrow L^{s(x)}(\Omega)$ and $W_0^{1,\mathcal{H}}(\Omega)\hookrightarrow L^{\mathcal{H}}(\Omega)$, the right-hand side of (\ref{e3.1a4}) is integrable. Therefore, by (\ref{e3.1a3}) and the Lebesgue dominated convergence theorem, we obtain
\begin{equation}\label{e3.1a7}
f(x,u_{n}) \rightarrow f(x,u)\,\, \text {in}\,\, L^{1}(\Omega),
\end{equation}
and hence
\begin{align}\label{e3.1a8}
\lim_{n \to \infty}\int_{\Omega} |\mathcal{A}_f(u_n)-\mathcal{A}_f(u)|^{s^\prime(x)} dx=0,
\end{align}
which, by Proposition \ref{Prop:2.2}, implies  that $\mathcal{A}_f$ is continuous in $L^{s^{\prime}(x)}(\Omega)$.\\
Next, consider the compact imbedding operator $i: W_0^{1,\mathcal{H}}(\Omega) \to L^{s(x)}(\Omega)$. Then the adjoint operator $i^*$ of $i$, given by $i^*:L^{s^{\prime}(x)}(\Omega) \to W_0^{1,\mathcal{H}}(\Omega)^*$  is also compact. Therefore, if we set $\mathcal{J^{\prime}}= i^*\circ \mathcal{A}_f$, then $\mathcal{J^{\prime}}:W_0^{1,\mathcal{H}}(\Omega)\rightarrow W_0^{1,\mathcal{H}}(\Omega)^{*}$ is compact.
\end{proof}

We're ready to present the proof of the main result.

\begin{proof}(\textbf{Proof of Theorem \ref{Thm:4.1}})\\
The required regularity conditions for $\mathcal{K}$ and $\mathcal{J}$ are proved in Lemmas \ref{Lem:4.2} and \ref{Lem:4.3}.\\
Moreover, it is clear that
\begin{equation}\label{e4.5}
\inf_{u \in W_0^{1,\mathcal{H}}}\mathcal{K}(u)=\mathcal{K}(0)=\mathcal{J}(0)=0.
\end{equation}
Note that if we define
\begin{equation}\label{e4.6}
\delta(x)=\sup\{\delta>0: B(x,\delta)\subseteq \Omega\}, \quad \forall x \in \Omega,
\end{equation}
and consider that $\Omega$ is open and connected in $\mathbb{R}^N$, it can easily be shown that there exists $x_0 \in \Omega$ such that $B(x_0,R)\subseteq \Omega$ with $R=\sup_{x \in \Omega}\delta(x)$.\\
First, we define the cut-off function  $\bar{u} \in W_{0}^{1,\mathcal{H}}(\Omega)$ by the formula
\begin{equation}\label{e4.7}
\bar{u}(x)=
\left\{
\begin{array}{ll}
0,&\text{if}\ x \in \Omega \setminus B(x_0,R),\\
\delta,&\text{if}\ x \in B(x_0,R/2),\\
\frac{2\delta}{R}(R-|x-x_0|),&\text{if}\ x \in B(x_0,R)\setminus B(x_0,R/2):=\hat{B}.
\end{array}
\right.
\end{equation}
Then using $(M)$ we have
\begin{align}\label{e4.8a}
\mathcal{K}(\bar{u})&=\int^{\varrho_{\mathcal{H}}(\bar{u})}_{0}\mathcal{M}(s)ds \geq \frac{\kappa_{1}}{\alpha_{1} (q^+)^{\alpha_{1}}}\left(\int_{\hat{B}}\left(|\nabla \bar{u}|^{p(x)}+\mu(x)|\nabla \bar{u}|^{q(x)}\right) dx\right)^{\alpha_{1}} \nonumber\\
& \geq \frac{\kappa_{1}}{\alpha_{1} (q^+)^{\alpha_{1}}}\left(\frac{2\delta}{R}\right)^{\alpha_{1}(p^- \wedge p^+)}\omega_N^{\alpha_{1}}\left(R^{N}-\left(\frac{R}{2}\right)^{N}\right)^{\alpha_{1}}\nonumber\\
& = \frac{\kappa_{1}}{\alpha_{1} (q^+)^{\alpha_{1}}} \omega_N^{\alpha_{1}} R^{N\alpha_{1}}(2^{N}-1)^{\alpha_{1}}2^{-N\alpha_{1}}\left(\frac{2\delta}{R}\right)^{\alpha_{1}(p^- \wedge p^+)},
\end{align}
and similarly
\begin{align}\label{e4.8b}
\mathcal{K}(\bar{u})& \leq \frac{\kappa_{2}}{\alpha_{2} (p^-)^{\alpha_{2}}}\left(\int_{\hat{B}}\left(|\nabla \bar{u}|^{p(x)}+\mu(x)|\nabla \bar{u}|^{q(x)}\right) dx\right)^{\alpha_{2}} \nonumber\\
& \leq \frac{\kappa_{2}(1+|\mu|_{\infty})^{\alpha_{2}}}{\alpha_{2} (p^-)^{\alpha_{2}}} \omega_N^{\alpha_{2}}\left(R^{N}-\left(\frac{R}{2}\right)^{N}\right)^{\alpha_{2}}\left(\frac{2\delta}{R}\right)^{\alpha_{2}(p^- \vee q^+)}.
\end{align}
where $\omega_{N}:=\frac{\pi^{N/2}}{N/2\Gamma(N/2)}$ is the volume of the unit ball in $\mathbb{R}^N$, where $\Gamma$ is the Gamma function.\\
By $(f_3)$, it reads
\begin{align}\label{e4.10}
\mathcal{J}(\bar{u})&=\int_{B(x_0,R/2)} F(x,\bar{u})dx\geq \inf_{x \in \Omega}F(x,\delta)\omega_N\left(\frac{R}{2}\right)^{N}.
\end{align}
Therefore
\begin{align}\label{e4.11}
\frac{\mathcal{J}(\bar{u})}{\mathcal{K}(\bar{u})}&\geq \frac{2^{N(\alpha_{2}-1)}\alpha_{2} (p^-)^{\alpha_{2}}\inf_{x \in \Omega}F(x,\delta)}{\kappa_{2}(1+|\mu|_{\infty})^{\alpha_{2}}\omega_N^{\alpha_{2}-1}(2^{N}-1)^{\alpha_{2}}R^{N(\alpha_{2}-1)}\left(\frac{2\delta}{R}\right)^{\alpha_{2}(p^- \vee q^+)}}.
\end{align}
Note that for any $u \in \mathcal{K}^{-1}((-\infty, r])$, by $(M)$ we have
\begin{align}\label{e4.11a}
\frac{\kappa_{1}}{\alpha_{1}} \varrho_{\mathcal{H}}(u)^{\alpha_{1}}\leq \mathcal{K}(u)\leq r.
\end{align}
Thus, by the involved embeddings it reads
\begin{align}\label{e4.12}
\mathcal{J}(u)&\leq \int_{\Omega} \bar{c}_{1}|u|dx + \int_{\Omega} \bar{c}_{2}|u|^{s(x)}dx\leq \bar{c}_{1}c_{1}(\mathcal{H})\|u\|_{1,\mathcal{H},0} + \bar{c}_{2}c_{2}(\mathcal{H})^{s^+}\|u\|_{1,\mathcal{H},0}^{s^+}\nonumber\\
& \leq \bar{c}_{1}c_{\mathcal{H}}(q^+)^{\frac{1}{p^-}}\left(\frac{\alpha_{1}}{\kappa_{1}}\right)^{\frac{1}{\alpha_1(p^- \wedge q^+)}}  r^{\frac{1}{\alpha_1(p^- \wedge q^+)}} + \bar{c}_{2}c_{\mathcal{H}}^{s^+}(q^+)^{\frac{s^+}{p^-}} \left(\frac{\alpha_{1}}{\kappa_{1}}\right)^{\frac{s^- \wedge s^+}{\alpha_1(p^- \wedge q^+)}}  r^{\frac{s^- \wedge s^+}{\alpha_1(p^- \wedge q^+)}},
\end{align}
and hence,
\begin{align}\label{e4.12a}
&\frac{1}{r}\mathcal{J}(u)\nonumber\\
&\leq \frac{1}{r}\biggl\{\bar{c}_{1}c_{\mathcal{H}}(q^+)^{\frac{1}{p^-}}\left(\frac{\alpha_{1}}{\kappa_{1}}\right)^{\frac{1}{\alpha_1(p^- \wedge q^+)}}  r^{\frac{1}{\alpha_1(p^- \wedge q^+)}} + \bar{c}_{2}c_{\mathcal{H}}^{s^+}(q^+)^{\frac{s^+}{p^-}} \left(\frac{\alpha_{1}}{\kappa_{1}}\right)^{\frac{s^- \wedge s^+}{\alpha_1(p^- \wedge q^+)}}  r^{\frac{s^- \wedge s^+}{\alpha_1(p^- \wedge q^+)}}\biggl\},
\end{align}
where $c_{\mathcal{H}}:=\max\{c_{1}(\mathcal{H}),c_{2}(\mathcal{H})\}$ and $c_{1}(\mathcal{H}),c_{2}(\mathcal{H})$ are the best embedding constants.
Then, using $(f_4)$ and (\ref{e4.11}) provides
\begin{align}\label{e4.13}
\frac{1}{r}\sup_{\mathcal{K}(u)\leq r}\mathcal{J}(u)< \frac{\mathcal{J}(\bar{u})}{\mathcal{K}(\bar{u})}.
\end{align}
Thus, the condition $(a1)$ of Lemma \ref{Lem:3.1} is verified.\\
Next, we show that the condition $(a2)$ of Lemma \ref{Lem:3.1} is satisfied; that is, the functional $\mathcal{I}_{\lambda}(\cdot):=\mathcal{K}(\cdot)-\lambda \mathcal{J}(\cdot)$ is coercive.\\
Using the embedding $W_0^{1,\mathcal{H}}(\Omega)\hookrightarrow L^{r(x)}(\Omega)$, and Lemma \ref{Lem:4.2}-$(i)$, it reads
\begin{equation}\label{e4.14}
\mathcal{I}_{\lambda}(u)\geq \frac{\kappa_{1}}{\alpha_{1} (q^+)^{\alpha_{1}}}\|u\|_{1,\mathcal{H},0}^{\alpha_{1} p^{-}}-\lambda \bar{c}_{3}(c_{\mathcal{H}}^{r^+}\|u\|_{1,\mathcal{H},0}^{r^{+}}+|\Omega|),
\end{equation}
which implies that for any $\lambda>0$, $\mathcal{I}_{\lambda}$ is coercive.\\
In conclusion, by Lemma \ref{Lem:3.1}, for any $\lambda \in \lambda_{r,\delta}\subseteq \left(\frac{\mathcal{K}(\bar{u})}{\mathcal{J}(\bar{u})},\frac{r}{\sup_{\mathcal{K}(u)\leq r}\mathcal{J}(u)} \right)$, the problem (\ref{e1.1a}) admits at least three distinct weak solutions.
\end{proof}

\section{Conclusions}\label{sec4}
In this work, we have investigated a class of double-phase variable-exponent Kirchhoff problems and established the existence of at least three distinct weak solutions. The problem framework extends classical Kirchhoff-type equations by incorporating a double-phase operator with variable growth, thereby capturing anisotropic and heterogeneous diffusion phenomena within a unified variational setting. A key contribution of this paper lies in the rigorous analysis of the associated energy functional, for which we have proven essential regularity properties such as $C^1$-smoothness, the $(S_+)$-condition, and sequential weak lower semicontinuity. These results enable the application of a three critical point theorem due to Bonanno and Marano, ensuring the existence of multiple solutions under suitable assumptions. As a novel tool in the analysis, we introduce an $n$-dimensional vector inequality (Proposition \ref{Prop:2.2bc}), which can be utilized to handle the technical challenges posed by variable-exponent, nonstandard growth functionals. This auxiliary result plays a fundamental role in proving the differentiability and compactness properties of the functional. Our findings extend the theoretical framework for Kirchhoff-type problems with a double-phase structure. Future work may explore further generalizations, including nonlocal operators, critical growth terms, or imposing singular perturbations in the same setting.

\section*{Declarations}
\section*{Data Availability}
No new data were created or analyzed in this study.
\section*{Funding}
This research received no external funding.
\section*{ORCID}
https://orcid.org/0000-0002-6001-627X

\bibliographystyle{tfnlm}
\bibliography{references}

\end{document}